\let\oldtocsection=\tocsection
\let\oldtocsubsection=\tocsubsection
\renewcommand{\tocsection}[2]{\hspace{-1mm}\bf\oldtocsection{ #1}{#2}}
\renewcommand{\tocsubsection}[2]{\hspace{6.9mm}\oldtocsubsection{#1}{#2}}
\theoremstyle{plain}
\newtheorem{thm}{Theorem}[section]
\newtheorem{prop}[thm]{Proposition}
\newtheorem{cor}[thm]{Corollary}
\newtheorem{lem}[thm]{Lemma}
\theoremstyle{definition}
\newtheorem{defn}[thm]{Definition}
\theoremstyle{remark}
\newtheorem{rem}[thm]{Remark}
\newcommand{\m}{\mathfrak{m}}
\newcommand{\p}{\partial}
\newcommand{\scal}{\operatorname{scal}}
\newcommand{\ric}{\operatorname{Ric}}
\newcommand{\trace}{\operatorname{tr}}
\newcommand{\divergence}{\operatorname{div}}
\newcommand{\dv}{\, dV}
\newcommand{\hg}{\widehat{g}}
\newcommand{\rgp}{(\mathring g,\mathring \pi)}
\newcommand{\onab}{\mathring\nabla}
\newcounter{mnotecount}[section]
\numberwithin{equation}{section}
\newtheorem{introthm}{\bf Theorem}
\title[The volume-renormalized mass from a Hamiltonian perspective]
{The volume-renormalized mass from a Hamiltonian perspective} 
\author[M. Dahl]{Mattias Dahl}
\address{Institutionen f\"or Matematik \\
Kungliga Tekniska H\"ogskolan \\
100 44 Stockholm \\
Sweden} 
\email{dahl@kth.se}
\author[K. Kr\"{o}ncke]{Klaus Kr\"{o}ncke}
\address{Institutionen f\"or Matematik \\
Kungliga Tekniska H\"ogskolan \\
100 44 Stockholm \\
Sweden} 
\email{kroncke@kth.se}
\author[S. McCormick]{Stephen McCormick}
\address{Institutionen f\"or teknikvetenskap och matematik \\
Lule{\aa} tekniska universitet \\
971 87 Lule\aa \\
Sweden} 
\email{stephen.mccormick@ltu.se}
\begin{document}

\begin{abstract}
We demonstrate that the volume-renormalized mass for asymptotically hyperbolic manifolds recently introduced by the authors \cite{DKM2023arxiv} can be deduced from a reduced Hamiltonian perspective. In order to do this, we first use Michel's formalism \cite{Mic11} of mass invariants to extend the definition of the volume-renormalized mass to initial data sets.

We consider spacetimes that are foliated by asymptotically Poincar\'e--Einstein Riemannian manifolds in the spirit of the Milne model of cosmology and reduce the ADM Hamiltonian to an unconstrained Hamiltonian system, analogous to the work of Fischer and Moncrief for spatially compact spacetimes \cite{FischerMoncrief2002}. We find that the reduced Hamiltonian in this case recovers the volume-renormalized mass. We then analyze the first and second variation of the reduced Hamiltonian and demonstrate that it is non-increasing over the evolution and constant only for self-similar spacetimes.
\end{abstract}

\maketitle

\section{Introduction}

The volume-renormalized mass is a geometric invariant of asymptotically hyperbolic manifolds, introduced by the authors in \cite{DKM2023arxiv}. It is essentially a combination of an ADM-like boundary integral and the renormalized volume of the manifold such that the dominant terms cancel out, yielding a finite quantity under weaker decay assumptions than usually required for each to individually be well-defined. A precise definition is given in Section \ref{Sec-prelim}. Although we were able to demonstrate that the volume-renormalized mass satisfies properties one may expect of a mass invariant, our initial motivation for the definition was purely geometric, as a regularisation of the total scalar curvature. In this article we motivate the volume-renormalized mass from a more physical perspective, complementing the geometric motivation previously given. In particular, we demonstrate that the volume-renormalized mass is equal to the value of a reduced Hamiltonian for a particular class of spacetimes related to the Milne model of cosmology. Additionally, this work motivates us to extend the definition of the volume-renormalized mass to initial data sets $(M,g,\pi)$. To this end, we employ Michel's formalism for mass invariants \cite{Mic11} to obtain a generalised quantity that depends additionally on $\pi$ and is well-defined under appropriate decay assumptions.

The Milne model is a cosmological model, simply taken to be a light cone of Minkowski space parametrized so that each slice of constant time is isometric to a rescaled hyperboloid (see Figure \ref{fig-milne} in Section \ref{sec-hamredmilne}). We consider a class of spacetimes that we call \textit{asymptotically Milne-like} in the sense that they are foliated by asymptotically Poincar\'e--Einstein (APE) manifolds.

We follow the programme of Fischer and Moncrief \cite{FischerMoncrief1997,FischerMoncrief2002} to perform a Hamiltonian reduction of the Einstein equations, which removes the gauge freedom and gives an unconstrained Hamiltonian system. This is in contrast to the standard ADM Hamiltonian, which is a constrained Hamiltonian system. Fischer and Moncrief developed this programme in the study of self-similar solutions to the Einstein equations with compact Cauchy surfaces, under the assumption that the spacetime can be foliated by closed constant mean curvature (CMC) hypersurfaces. We carry out this Hamiltonian reduction for asymptotically Milne-like spacetimes that are foliated by noncompact spacelike CMC hypersurfaces. The reduced Hamiltonian that we obtain is the volume-renormalized mass, up to a rescaling by a cosmological time parameter.

Heuristically, the reduced Hamiltonian is derived as follows. We work with the assumption that the region of spacetime under consideration can be foliated by asymptotically hyperbolic CMC Riemannian hypersurfaces. Via the resolution of the Yamabe problem in this setting, the conformal method then allows us to parametrise the set of solutions to the Einstein constraint equations for each (constant) mean curvature in terms of a constant scalar curvature metric $g$ and transverse-traceless tensor density $p$. Treating $g$ and (a time rescaling of) $p$ as the canonical variables leads to an unconstrained Hamiltonian system. Note that in order to fully reduce the Hamiltonian, so that each point in the phase space corresponds to a unique vacuum solution, one must additionally fix a gauge condition.

After establishing that the value of reduced Hamiltonian equals the volume-renormalized mass, we study its variational properties. In our previous article \cite{DKM2023arxiv} it was shown that critical points of the volume-renormalized mass over a space of constant scalar curvature metrics are Einstein metrics (see also \cite{McCormick2025}). In the same spirit, we consider the unconstrained problem and show that critical points of the reduced Hamiltonian on the reduced phase space also coincide with Einstein metrics (with vanishing reduced momentum). Furthermore, compute the second variation to study the minimization properties. Finally, we demonstrate that the volume-renormalized mass is indeed non-increasing along Einstein evolution, and constant if and only iff the spacetime is Milne-like.

We state simplified versions of our main results presented here.
\begin{introthm}[Theorem \ref{thm-well-defined}]
	The volume-renormalized mass of an APE initial data set $(g,\pi)$ given by
		\[ \begin{split}
			\m_{VR,\mathring g}(g,\pi)
			= 
			\lim_{R \to \infty}
			\Big(& 
			\m_{{\rm ADM},\mathring{g}}(g,R)+2(n-1)RV_{\mathring{g}}(g,R) \\
			& +2\int_{B_R} \left( \trace_g\overline\pi+n(n-1) \right) dV_{\mathring g} \Big)
		\end{split} \] 
		is well-defined and finite if the Hamiltonian constraint is integrable.
\end{introthm}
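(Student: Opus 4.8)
The plan is to show that the function $R\mapsto F(R)$, where $F(R)$ denotes the bracketed expression, is Cauchy as $R\to\infty$; existence of the limit --- hence well-definedness of $\m_{VR,\mathring g}(g,\pi)$ --- and finiteness then follow at once. Since under mere APE decay each of the three terms of $F$ may individually diverge, the point is to control the combination.

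First I would rewrite $\m_{{\rm ADM},\mathring g}(g,R)$ as the flux across the coordinate sphere $S_R$ of a Freud/Michel-type superpotential $\mathbb U=\mathbb U(h,w)$ built from $h:=g-\mathring g$, $w:=\pi-\mathring\pi$ and the reference data $\rgp$, following Michel's formalism \cite{Mic11}. The divergence theorem on the annulus $A(R_1,R_2):=B_{R_2}\setminus B_{R_1}$ then gives
\[
\m_{{\rm ADM},\mathring g}(g,R_2)-\m_{{\rm ADM},\mathring g}(g,R_1)=\int_{A(R_1,R_2)}\divergence_{\mathring g}\mathbb U\dv_{\mathring g}.
\]
Using the Bianchi-type identity behind Michel's construction, together with $\Phi\rgp=0$ so that the linearized constraint satisfies $D\Phi|_{\rgp}(h,w)=\Phi(g,\pi)-Q(h,w)$ with $Q$ quadratic in $(h,w,\onab h,\onab w)$, I would expand
\[
\divergence_{\mathring g}\mathbb U=N\cdot\mathcal H(g,\pi)+\mathcal D(h,w)+Q(h,w),
\]
where $N$ is the weight for the relevant mass component (arranged so that the momentum constraint does not enter, which is why the hypothesis concerns only $\mathcal H$), $\mathcal H$ is the Hamiltonian constraint, and $\mathcal D$ is the linear ``defect'' measuring the failure of the corresponding potential to be a Killing initial data set for the Milne reference.

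The crux is that $\mathcal D(h,w)$ is, up to quadratic terms, exactly minus the $R$-variation of the two correction terms in $F$: integrating it over $A(R_1,R_2)$ and using $dV_g-dV_{\mathring g}=(\tfrac12\trace_{\mathring g}h+O(|h|^2))\dv_{\mathring g}$ together with the analogous expansion of $\trace_g\overline\pi$ about its reference value $-n(n-1)$, one recovers the annular differences of $2(n-1)RV_{\mathring g}(g,R)$ and of $2\int_{B_R}(\trace_g\overline\pi+n(n-1))\dv_{\mathring g}$. Hence
\[
F(R_2)-F(R_1)=\int_{A(R_1,R_2)}N\cdot\mathcal H(g,\pi)\dv_{\mathring g}+\int_{A(R_1,R_2)}\widetilde Q\dv_{\mathring g},
\]
with $\widetilde Q$ at least quadratic in $(h,w,\onab h,\onab w)$. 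The first integral tends to $0$ as $R_1,R_2\to\infty$ because ``the Hamiltonian constraint is integrable'' means precisely $N\cdot\mathcal H(g,\pi)\in L^1(\dv_{\mathring g})$; the second tends to $0$ because, under the APE decay, every summand of $\widetilde Q$ decays strictly faster than the volume element grows and hence lies in $L^1$. Therefore $F$ is Cauchy, and $\m_{VR,\mathring g}(g,\pi)$ is well-defined and finite.

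The main obstacle, and the bulk of the real work, is the bookkeeping in the two expansion steps: carrying out the divergence computation of Michel's superpotential for the genuinely non--time-symmetric Milne reference ($\mathring\pi\neq0$), and then matching, inside $\divergence_{\mathring g}\mathbb U$, the precise linear pieces that reassemble into $2(n-1)RV_{\mathring g}(g,R)$ and $2\int_{B_R}(\trace_g\overline\pi+n(n-1))\dv_{\mathring g}$ with the stated constants --- while verifying that the residual $\widetilde Q$ is genuinely quadratic and integrable against the relevant APE weights. Once this decomposition is pinned down, the convergence argument is the soft part.
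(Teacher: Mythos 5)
Your proposal follows essentially the same route as the paper: Michel's identity $\langle V,D\Phi_{\rgp}(h,q)\rangle=\divergence_{\mathring g}\mathbb U+\langle D\Phi_{\rgp}^*(V),(h,q)\rangle$ with $V=(1,0)$, the identification of the linear ``defect'' $\langle D\Phi_0{}_{\rgp}^*(1),(h,q)\rangle$ with $2(n-1)(\dv_g-\dv_{\mathring g})+2(\trace_g\overline\pi+n(n-1))\dv_{\mathring g}$ modulo quadratic terms, and integrability of the quadratic remainder from $\delta>\tfrac{n-1}{2}$; your Cauchy-on-annuli formulation is just the paper's ``the full integrand is in $L^1$'' argument restated. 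The only cosmetic difference is that you assume $\Phi\rgp=0$ exactly, whereas the paper only requires $\scal_{\mathring g}+n(n-1)\in L^1$ for the reference, which changes nothing essential.
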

\begin{introthm}\label{intro-ham}
	The volume-renormalized mass is equal to a reduced Hamiltonian \`a la Fischer--Moncrief for asymptotically Milne-like spacetimes foliated by spacelike CMC hypersurfaces, up to a time rescaling.
\end{introthm}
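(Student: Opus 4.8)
\textbf{Proof strategy for Theorem \ref{intro-ham}.}
The plan is to carry out the Fischer--Moncrief reduction \cite{FischerMoncrief1997,FischerMoncrief2002} for an asymptotically Milne-like spacetime while keeping careful track of the contributions from conformal infinity. I would begin with the ADM action in an $(n+1)$-splitting with lapse $N$, shift $X$, spatial metric $g$ and momentum $\pi$: since the Cauchy slices are noncompact with a conformal boundary, the first variation of the action produces a boundary term at infinity, and adding it to obtain a well-posed variational principle yields the ADM Hamiltonian $H_{\mathrm{ADM}}=\int_M(N\mathcal H + X^i\mathcal P_i)\,dV + (\text{boundary term at infinity})$. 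Next I would impose the CMC gauge, demanding that the mean curvature of each slice be a spatial constant $\tau=\tau(t)$ --- the analogue of cosmological time --- normalised to match the Milne model. The key symplectic observation, exactly as in \cite{FischerMoncrief2002}, is that in this gauge $\tau$ pairs canonically with the spatial volume of the slice, so the reduced dynamics in $\tau$-time is generated by a $\tau$-dependent multiple of that volume; in the noncompact APE setting this volume is infinite and must be regularised, which is where the volume-renormalized mass will enter.

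The intermediate step is the conformal parametrisation of the constraint set. Using York's decomposition, $\pi$ splits into a pure-trace piece fixed by $\tau$ and a transverse-traceless density $p$, and the Hamiltonian constraint becomes a Lichnerowicz-type equation for the conformal factor $\varphi$ relating $g$ to a conformal representative $\mathring g$ of constant scalar curvature $-n(n-1)$. Solvability of this prescribed-scalar-curvature problem in the asymptotically hyperbolic class, together with the asymptotic expansion of $\varphi$ at infinity, is what brings in the APE hypotheses and the decay conditions of Theorem \ref{thm-well-defined}; granting it, one obtains the reduced phase space coordinatised by $(\mathring g,p)$ with $\mathring g$ of constant scalar curvature and $p$ transverse-traceless. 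Treating $(\mathring g,\ \text{a }\tau\text{-rescaling of }p)$ as canonical variables and passing to the rescaled time then yields the unconstrained reduced Hamiltonian $H_{\mathrm{red}}(\mathring g,p)$ (fixing an additional spatial gauge is needed only to make each phase-space point correspond to a unique vacuum spacetime and does not affect the value of $H_{\mathrm{red}}$).

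The heart of the proof is the identification $H_{\mathrm{red}}=c(\tau)\,\m_{VR,\mathring g}(g,\pi)$ for an explicit time factor $c(\tau)$. Expanding the truncated spatial volume $V_{\mathring g}(g,R)$ against the reference metric, and substituting the conformal parametrisation together with the Hamiltonian constraint, produces a divergent $R$-linear piece and a finite remainder; the $R$-linear piece is cancelled against the $R$-linear growth of the ADM boundary integral $\m_{\mathrm{ADM},\mathring g}(g,R)$, and the trace part of $\pi$, together with the volume counterterm $n(n-1)$, assembles into $2\int_{B_R}(\trace_g\overline\pi + n(n-1))\,dV_{\mathring g}$. Putting these together reproduces exactly the three-term expression defining $\m_{VR,\mathring g}$ in Theorem \ref{thm-well-defined}, whose $R\to\infty$ limit exists by that theorem once the Hamiltonian constraint is integrable; the overall constant $c(\tau)$ collects the powers of $\tau$ from the rescaling of $p$ and from the change of time variable, and is the ``time rescaling'' in the statement.

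I expect the main obstacle to be precisely this last cancellation: one must expand the ADM boundary term, the conformal factor, and the volume integrand to sufficiently high order at conformal infinity and check that, after all subtractions, only the $\m_{VR,\mathring g}$ combination survives with no residual divergent or gauge-dependent terms --- a noncompact phenomenon with no analogue in \cite{FischerMoncrief2002}. A secondary technical point is that, unlike the compact reduction where the conformal factor is pinched between positive constants, here $\varphi$ must be controlled in weighted function spaces adapted to the APE ends and shown to solve the Lichnerowicz equation with the correct asymptotics; I would handle this by invoking the asymptotically hyperbolic Yamabe/prescribed-scalar-curvature theory together with the decay analysis already developed in \cite{DKM2023arxiv}.
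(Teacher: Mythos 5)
Your strategy is the right one and matches the paper's in outline: the ADM action in the rescaled splitting $\hg = t^2 g$, the CMC gauge giving $\tau=-n/t$, the conformal parametrisation of the constraint set by $(\gamma,p)$ via the Lichnerowicz equation, the Legendre transform identifying $t^{n-1}p$ as the momentum conjugate to $\gamma$, and the reduced Hamiltonian as a regularised volume plus a boundary contribution. Where you differ is in how the boundary terms and divergences are handled. The paper never performs an order-by-order expansion at conformal infinity: it first carries out the reduction on a compact exhaustion $\Sigma_r$ with the Gibbons--Hawking--York term, obtaining $\mathcal H^0_{red}=t^{n-2}\bigl(-2\int_{\partial\Sigma_r}H_g\,dS_g+2(n-1)\int_{\Sigma_r}dV_g+\text{shift term}\bigr)$; it then normalises by subtracting the value at the reference data, and a short Hawking--Horowitz computation (Lemma~\ref{lem-ADMlim}) shows that $2\int_{\partial\Sigma_r}(H_{\mathring g}-H_g)\,dS$ equals the ADM surface integral exactly (not just asymptotically) once $g=\mathring g$ on $\partial\Sigma_r$. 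Letting $r\to\infty$ then produces the combination $\m_{{\rm ADM},\mathring g}+2(n-1)RV_{\mathring g}$ directly, with finiteness quoted from Theorem~\ref{thm-well-defined} and the results of \cite{DKM2023arxiv} rather than re-proved by cancellation of divergent pieces; your proposed expansion at infinity would work but essentially re-derives that well-definedness. One small correction to your final identification: on the reduced phase space the CMC normalisation forces $\trace_g\overline\pi\equiv -n(n-1)$, so the term $2\int_{B_R}(\trace_g\overline\pi+n(n-1))\,dV_{\mathring g}$ that you describe as being assembled from the trace of $\pi$ vanishes identically, and the reduced Hamiltonian is $t^{n-2}$ times the \emph{Riemannian} volume-renormalized mass $\m_{VR,\mathring g}(g)$ of $g=\varphi^{4/(n-2)}\gamma$; the initial-data version of the mass is needed for Theorem~\ref{thm-well-defined} and Remark~\ref{rem-compatibility}, not as the value of $\mathcal H_{red}$ itself.
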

Note that Theorem \ref{intro-ham} is not stated as a theorem in the main body of the text, but rather is developed throughout Sections \ref{Sec-CompactHam} \ref{sec-hamredmilne}.
\begin{introthm}[Theorem \ref{thm-firstvar}]
	Critical points of the volume-renormalized mass over the reduced phase space are exactly at Einstein metrics with vanishing (reduced) momentum.
\end{introthm}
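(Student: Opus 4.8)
\emph{Setup and strategy.} Recall from Sections~\ref{Sec-CompactHam}--\ref{sec-hamredmilne} that a point of the reduced phase space is a pair $(g,p)$, where $g$ is an APE metric normalized so that $\scal_g\equiv -n(n-1)$ and $p$ is a transverse-traceless tensor density with respect to $g$ (of the decay required for $\m_{VR}$ to be defined), playing the role of the momentum conjugate to the conformal class; and that the reduced Hamiltonian $H(g,p)$ equals $\m_{VR,\mathring g}$ of the initial data set $(\tilde g,\tilde\pi)$ reconstructed from $(g,p)$ by the conformal method at the chosen CMC value $\tau$ (solving the Lichnerowicz equation for the conformal factor), up to an overall positive power of $\tau$ that is irrelevant for locating critical points. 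The plan is to split a tangent vector to the reduced phase space into a variation of $p$ (with the conformal class of $g$ fixed) and a variation of $g$ through metrics of constant scalar curvature, compute $DH$ on each factor, and show the joint kernel consists exactly of the pairs $(g,0)$ with $g$ Einstein.

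\emph{Variation in the momentum.} Fixing $g$, the Hamiltonian depends on $p$ only through the conformal factor $\phi=\phi(g,p,\tau)$ and through the term $2\int_{B_R}(\trace_g\overline\pi+n(n-1))\,dV_{\mathring g}$ in $\m_{VR}$. I would differentiate the Lichnerowicz equation implicitly: its linearization at $(g,p)$ is a uniformly elliptic operator that is an isomorphism on the relevant weighted function spaces in the asymptotically hyperbolic setting, so $D_p\phi$ is the fast-decaying solution of a linear equation whose source is controlled linearly by $p$; in particular $D_p\phi$ vanishes when $p=0$. Inserting $D_p\phi$ and $D_p\tilde\pi$ into Michel's variational formula for the mass gives a quadratic form in $p$ for $D_pH$, and the core of this step is to show that this form is definite --- equivalently, that $p\mapsto H(g,p)$ has a strict extremum at $p=0$. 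This should follow from the monotone dependence of $\phi$, and hence of the renormalized volume appearing in $\m_{VR}$, on $\lVert p\rVert_g^2$: more momentum forces a larger conformal factor. Thus $D_pH=0$ if and only if $p=0$.

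\emph{Variation in the metric.} It remains to treat critical points with $p=0$, where $H(g,0)$ reduces, up to the positive rescaling by a power of $\tau$, to the Riemannian volume-renormalized mass $\m_{VR,\mathring g}(g)$ on the space of APE metrics with $\scal_g\equiv -n(n-1)$ (the reconstructed data being time-symmetric modulo the umbilic Milne contribution). Here I invoke the first-variation computation of \cite{DKM2023arxiv} (see also \cite{McCormick2025}): using a Lagrange multiplier for the constant-scalar-curvature constraint together with the linearization formula for scalar curvature and the boundary-term cancellations built into $\m_{VR}$, the Euler--Lagrange equation becomes $\ric_g=-(n-1)g$, so $g$ is Einstein, hence Poincar\'e--Einstein with this normalization. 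Conversely, if $g$ is Einstein and $p=0$, the same formulas show that every first variation of $H$ vanishes, which yields the ``exactly'' in the statement.

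\emph{Main obstacle.} The delicate part is the momentum variation: rigorously differentiating the Lichnerowicz equation on weighted function spaces in the APE setting, verifying that $D_p\phi$ decays rapidly enough to contribute no boundary term to the mass, and --- most importantly --- pinning down the sign of the resulting quadratic form so as to conclude $p=0$ rather than merely that $p$ lies in some distinguished subspace. A secondary point is to ensure that the integrability hypothesis on the Hamiltonian constraint from Theorem~\ref{thm-well-defined} is preserved along the variations, so that $H$ is genuinely differentiable on the reduced phase space.
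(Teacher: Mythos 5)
Your proposal follows essentially the same route as the paper: it splits the variation into the momentum and metric directions, expresses the Hamiltonian through the conformal factor solving the Lichnerowicz equation, shows $D_p\varphi=0$ when $p=0$ and that the momentum variation cannot vanish when $p\neq 0$ by a sign/monotonicity argument, and reduces the metric direction to the known Riemannian first-variation result of \cite{DKM2023arxiv,McCormick2025}. The ``main obstacle'' you flag is resolved in the paper exactly as you anticipate: the maximum principle applied to the linearized Lichnerowicz equation in the direction $r=p$ gives $\varphi_p\geq 0$, an isomorphism theorem for the relevant operator on weighted H\"older spaces shows $\varphi_p$ decays fast enough that $\int_M\Delta_{\mathring g}\varphi_p\,dV_{\mathring g}=0$, and criticality then forces $\int_M\varphi^{\frac{n+2}{n-2}}\varphi_p\,dV_{\mathring g}=0$, hence $\varphi_p\equiv 0$ and, by the linearized Lichnerowicz equation, $p\equiv 0$.
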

\begin{introthm}[Corollary \ref{cor-secondvar}]
	If the Einstein operator at a critical point has positive first eigenvalue, then that critical point is a minimum of the volume-renormalized mass.
\end{introthm}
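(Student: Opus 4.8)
The plan is to obtain the corollary as an immediate consequence of the second-variation formula for the reduced Hamiltonian derived above, together with the spectral gap hypothesis. By Theorem~\ref{thm-firstvar} a critical point of $\m_{VR,\mathring g}$ on the reduced phase space is a pair $(g,0)$ consisting of an Einstein metric $g$ with vanishing reduced momentum, so everything reduces to showing that the Hessian of $\m_{VR,\mathring g}$ at such a point is positive definite and that this forces a local minimum.

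First, since the reduced momentum vanishes at the critical point, the Hessian decouples into a block quadratic in the momentum variation and a block quadratic in the transverse-traceless metric variation $h$. The momentum block is, up to a positive constant, an $L^2$-type inner product, hence positive definite; the metric block, by the second-variation computation, is a positive multiple of $\langle \Delta_E h, h\rangle_{L^2}$, where $\Delta_E$ is the Einstein operator acting on transverse-traceless symmetric $2$-tensors in the relevant weighted function space. Establishing precisely this identification — and checking that no cross term survives — is the content of the preceding proposition, so here it may simply be quoted.

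Next, feed in the hypothesis $\lambda_1(\Delta_E)>0$. Because $h$ ranges over transverse-traceless tensors, this gives the coercive estimate $\langle \Delta_E h,h\rangle_{L^2}\ge \lambda_1\|h\|_{L^2}^2$; together with positivity of the momentum block, the full Hessian is positive definite and bounded below by a positive multiple of the ambient $L^2$-norm. In particular the second variation is non-degenerate, the gauge directions having already been removed in the reduction. A Taylor expansion of $\m_{VR,\mathring g}$ about $(g,0)$, with the higher-order remainder controlled in the same norm, then upgrades this coercivity to the conclusion that $(g,0)$ is a strict local minimum of the volume-renormalized mass.

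The main obstacle is this final upgrade. In the infinite-dimensional, asymptotically hyperbolic setting, positivity of the second variation does not by itself yield local minimality: one must exhibit weighted Sobolev spaces in which the spectral gap of $\Delta_E$ on transverse-traceless tensors holds \emph{and} in which the nonlinear remainder of $\m_{VR,\mathring g}$ is genuinely of higher order relative to the quadratic form bounding the Hessian below. Verifying the compatibility of these two requirements — that the coercivity is measured in the ``correct'' norm — is where the analytic work concentrates; a secondary point is to ensure the eigenvalue bound is applied only on the slice transverse to the diffeomorphism action, which is exactly what the transverse-traceless decomposition underlying the reduced phase space provides.
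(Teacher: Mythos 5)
Your argument follows the paper's route exactly: Corollary \ref{cor-secondvar} is obtained as an immediate consequence of the second-variation formula, which gives the decoupled Hessian $\int_M\bigl(\tfrac12 h^{ij}\Delta_E h_{ij}+2|r|^2_{\widetilde\gamma}dV_{\widetilde\gamma}^{-2}\bigr)dV_{\widetilde\gamma}$ with no cross term, so the spectral hypothesis immediately yields the coercive bound $\tfrac12\lambda_{\rm min}\|h\|_{L^2}^2+2\|r\|_{L^2}^2$. One clarification: the paper's formal corollary asserts \emph{only} this Hessian lower bound (the ``minimum'' phrasing in the introduction is an informal gloss), so the Taylor-expansion upgrade to strict local minimality that you rightly flag as the delicate analytic point is not carried out in the paper either; your caution there is warranted but does not mark a divergence from the paper's actual proof.
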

\begin{introthm}[Theorem \ref{thm-mono}]\label{intro-mono}
	The volume-renormalized mass is non-increasing along the Einstein evolution, and constant if and only if the spacetime is Milne-like.
\end{introthm}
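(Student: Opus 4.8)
The plan is to read off the monotonicity from the Hamiltonian picture built in Sections~\ref{Sec-CompactHam}--\ref{sec-hamredmilne}. Along a CMC foliation of an asymptotically Milne-like vacuum spacetime, parametrised by the (spatially constant) mean curvature $\tau$ as cosmological time, the volume-renormalized mass $\m_{VR}$ coincides, up to the explicit $\tau$-rescaling recorded there, with the reduced Hamiltonian $H_{\rm red}$ on the reduced phase space. Since $H_{\rm red}$ generates the reduced flow, along solutions one has $\frac{d}{d\tau}H_{\rm red}=\frac{\partial}{\partial\tau}H_{\rm red}$, where the partial derivative sees only the explicit $\tau$-dependence entering through the rescaling factor and through the constant-mean-curvature Lichnerowicz equation. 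So the first task is to write $H_{\rm red}$ purely in terms of the reduced (conformal) data --- a constant scalar curvature Yamabe representative and a transverse-traceless density --- together with $\tau$, using the resolution of the Yamabe problem in the asymptotically hyperbolic setting, and then to differentiate this expression in $\tau$.

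Next I would compute $\partial_\tau H_{\rm red}$ explicitly. Differentiating the Lichnerowicz equation in $\tau$ yields a linear elliptic equation for the $\tau$-derivative of the conformal factor whose zeroth-order coefficient has a definite sign; the resulting maximum principle / integration-by-parts estimate --- the noncompact APE analogue of the Fischer--Moncrief argument \cite{FischerMoncrief2002} --- forces $\partial_\tau H_{\rm red}\le 0$ for the time orientation in which the slices expand, i.e.\ $\m_{VR}$ is non-increasing along the evolution. Here the boundary terms produced by differentiating the regularised integrals in the definition of $\m_{VR}$ must be shown to drop out in the limit $R\to\infty$; this is where the APE decay and the integrability of the Hamiltonian constraint are used, via the known asymptotics of the CMC lapse (which approaches the Milne lapse at infinity) together with a persistence-of-asymptotics statement for the flow.

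For the rigidity statement I would follow the equality case through the estimate above. Vanishing of $\frac{d}{d\tau}\m_{VR}$ at a single time forces the sum-of-squares remainder to vanish pointwise, hence the transverse-traceless density is zero and the Yamabe representative is Einstein --- exactly the critical-point condition of Theorem~\ref{thm-firstvar}, i.e.\ the slice has vanishing reduced momentum and is Poincar\'e--Einstein. If this holds along the entire foliation, the evolution equations force the spacetime metric to be the warped-product cone $-dt^2+t^2 g_{\rm PE}$ over a fixed Poincar\'e--Einstein slice, which is Ricci-flat precisely for the normalisation making it Milne-like. The converse --- that a Milne-like spacetime has constant $\m_{VR}$ --- is a direct computation from the definition in Theorem~\ref{thm-well-defined}, in which the boundary and volume terms cancel to a constant (in fact to zero for the model).

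The main difficulty I expect is analytic rather than formal: making the conformal-method machinery rigorous in the noncompact APE category --- existence, uniqueness and controlled asymptotics of the Yamabe-type conformal factor solving the Lichnerowicz equation, and the maximum principle for its $\tau$-derivative --- and, relatedly, justifying the interchange of $\frac{d}{d\tau}$ with $\lim_{R\to\infty}$ in the definition of $\m_{VR}$, which requires controlling the decay of $\partial_\tau g$ and $\partial_\tau\pi$ well enough that the contributions at radius $R$ do not survive. A prerequisite, possibly needing its own argument, is that the APE decay conditions and the integrability of the Hamiltonian constraint are preserved by the evolution, i.e.\ a local well-posedness and propagation-of-asymptotics result for the CMC-gauge Einstein equations in this class.
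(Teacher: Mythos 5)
Your overall plan transplants the Fischer--Moncrief mechanism, but it stumbles on exactly the point the paper flags in its closing remark of Section~\ref{Sec-mono}: in this noncompact setting the monotone quantity is \emph{not} the reduced Hamiltonian but $t^{2-n}\mathcal H_{red}=\m_{VR,\mathring g}(g)$. Your proposed shortcut $\frac{d}{d\tau}H_{\rm red}=\frac{\partial}{\partial\tau}H_{\rm red}$, with the partial derivative seeing only the explicit time dependence, is the compact-case argument; here the relation between $\tau$ and $t$ is pinned down by the asymptotic condition $N\to1$ at infinity rather than being freely prescribable, the canonical momentum conjugate to $\gamma$ is the time-dependent density $t^{n-1}p$, and the explicit prefactor $t^{n-2}$ in $\mathcal H_{red}=t^{n-2}\m_{VR,\mathring g}(g)$ contributes a term proportional to $\m_{VR}$ itself whose sign is not controlled. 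Chasing the explicit time dependence na\"ively either lands you on monotonicity of the wrong quantity or produces no definite sign for $\partial_t\m_{VR}$. The paper avoids this entirely: it computes $\partial_t\m_{VR,\mathring g}(g)$ \emph{directly} from the ADM evolution equations \eqref{eq-evoeqs}, cancels the boundary contributions using the momentum constraint and the APE decay of $g-\mathring g$, $K+t\mathring g$, $N-1$, and then substitutes the CMC lapse equation \eqref{eq-elliptic-N} to obtain the clean identity $\partial_t\m_{VR,\mathring g}(g)=-2(n-1)t^{-1}\int_M |K_0|_g^2\,N\,dV_g$. Your Lichnerowicz-differentiation route is not needed for monotonicity at all.

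The rigidity step also has a gap. Vanishing of the derivative at a single time only forces $K_0=0$ there, i.e.\ vanishing reduced momentum; it does \emph{not} force the slice to be Einstein, contrary to your claim that the sum-of-squares remainder yields both conditions of Theorem~\ref{thm-firstvar}. The paper must work harder: from $K_0=0$ at $t_0$ the second $t$-derivative of the mass also vanishes, the third derivative equals $-4(n-1)t^{-1}\int_M|\partial_t K_0|^2N\,dV_g$ and is strictly negative unless $\partial_t K_0=0$ as well; only then does the lapse equation give $N\equiv1$ and the evolution equations give $\ric_{\widetilde\gamma}=-(n-1)\widetilde\gamma$, hence a Milne-like development. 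Your converse direction (Milne-like implies constant mass) and your cautions about interchanging $\partial_t$ with $\lim_{R\to\infty}$ and about propagation of the APE asymptotics are sound, but the two issues above are where the argument as proposed would fail.
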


The article is structured as follows. First, Section \ref{Sec-prelim} sets some basic definitions and terminology. Then Section \ref{Sec-IDVR} derives an appropriate definition of the volume-renormalized mass of an initial data set. Section \ref{Sec-CompactHam} develops the basic ideas of the reduced Hamiltonian for compact initial data with boundary before Section \ref{sec-hamredmilne} introduces the reduced Hamiltonian for asymptotically Milne-like spacetimes. Section \ref{Sec-variations} studies the first and second variation of the volume-renormalized mass. Then finally in Section \ref{Sec-mono} we calculate the derivative of the volume-renormalized mass under Einstein evolution to establish Theorem \ref{intro-mono}.

\section*{Acknowledgements}
We appreciate support from the G\"oran Gustafsson Foundation (KK) and GS Magnusons Fond (SM).

\section{Preliminaries}\label{Sec-prelim}

\subsection{APE manifolds and their volume-renormalized mass}

In this section we recall some definitions including the definition of the volume-renormalized mass from \cite{DKM2023arxiv}, as well as establish notation. 

\begin{defn}
Let $\overline M$ be a compact manifold with compact boundary $\partial \overline M$. Let $\rho : \overline M \to [0,\infty)$ be a smooth {\em boundary defining function}, which means that $\rho^{-1}(0) = \partial \overline M$ and $d\rho|_{\partial \overline M} \neq 0$. Let $M = \overline M \setminus \partial \overline M$. We say that a Riemannian metric $g$ on $M$ is {\em conformally compact} of class $C^{k,\alpha}$, if there is a $C^{k,\alpha}$-Riemannian metric $b$ on $\overline M$ so that $g = \rho^{-2} b$. In this case, the sectional curvatures of $g$ tend to $-|d\rho|^2_{h}$ at $\partial \overline M$. If $|d\rho|^2_{h} = 1$ so that all sectional curvatures tend to $-1$ at $\partial \overline M$ we say that $(M,g)$ is {\em asymptotically hyperbolic}, or simply AH. The Riemannian manifold $(\overline M,b)$ is called the {\em conformal background} of $(M,g)$ and $(\p \overline M,[b|_{\partial \overline M}])$ is called the {\em conformal boundary} of $(M,g)$.
\end{defn}

We will make use of the radial function $r$ defined by $\rho =e^{-r}$, which we use to define the weighted H\"older spaces $C^{k,\alpha}_\delta(M)=e^{-\delta r}C^{k,\alpha}(M)$, equipped with the norm
\[
\|u\|_{k,\alpha,\delta} = \|e^{\delta r}u\|_{C^{k,\alpha}(M)}.
\]
Here, $\delta\in\mathbb R$ and $C^{k,\alpha}(M)$ denotes the standard H\"older space with the norm $\|\cdot\|_{C^{k,\alpha}(M)}$. Weighted H\"older spaces of sections of bundles are defined in the usual way (see, for example, \cite{Lee06}).

Let $\mathcal{R}(M)$ be the space of Riemannian metrics on $M$, that is the space of positive definite sections of the bundle $S^2 T^*M$ of symmetric bilinear forms on $TM$. For a fixed AH manifold $(\mathring{M},\mathring{g})$ with $\mathring{g} \in C^{k,\alpha}(S^2 T^*M)$ we define the space of Riemannian metrics on $\mathring{M}$ asymptotic to $\mathring{g}$ as
\[
\mathcal{R}^{k,\alpha}_{\delta}(\mathring{M},\mathring{g})
= \left\{
g \in \mathcal{R}(M) \mid
g-\mathring{g}\in C^{k,\alpha}_{\delta}(S^2 T^*\mathring{M})
\right\}.
\]

\begin{defn}\label{defn-AH}
Let $(M,g)$ and $(\mathring{M} ,\mathring{g})$ be AH manifolds with conformal backgrounds $\overline M$ and $\mathring{\overline M}$, respectively. We say that $(M,g)$ is \emph{asymptotic to} $(\mathring{M} ,\mathring{g})$ of order $\delta>0$ with respect to $\varphi$, if there are bounded and closed sets $K\subset M$, $\mathring{K}\subset \mathring{M}$ and a $C^{k+1,\alpha}$-diffeomorphism $\varphi: \overline M \setminus K\to \mathring{\overline M}\setminus \mathring{K}$
of manifolds with boundary such that 
$\varphi_*g \in \mathcal{R}^{k,\alpha}_{\delta}(\mathring{M} \setminus \mathring{K}, \mathring{g})$.
\end{defn}

With $\mathring{g}, K,\mathring{K}$ and $\varphi$ as in Definition \ref{defn-AH}, we define
\[
\mathcal{R}^{k,\alpha}_{\delta}(M,\varphi,\mathring{g})
=
\left\{ 
g \in \mathcal{R}(M) \mid 
g\in C^{k,\alpha}(S^2 T^*M), \varphi_*g \in \mathcal{R}^{k,\alpha}_{\delta}(\mathring{M} \setminus \mathring{K}, \mathring{g})
\right\}.
\]

\begin{defn}\label{defn:APE_2} 
We say an AH manifold $(M,g)$  of class $C^{k,\alpha}$, $k\geq 2$ is \emph{asymptotically Poincar\'{e}--Einstein} (APE) of order $\delta$ if $|\ric_g+(n-1)g|_g\in C^{k-2,\alpha}_\delta(M)$ for some  $\frac{n-1}{2}<\delta<n-1$ satisfying $\delta \leq k+\alpha$. The set of all such metrics is denoted by $\mathcal{R}^{k,\alpha}_{\delta}(M)$. We will always impose these restrictions on $\delta,k$ and $\alpha$.
\end{defn}

\begin{rem}
All metrics in $\mathcal R^{k,\alpha}_\delta$ are asymptotic to each other in the sense of Definition \ref{defn-AH} (Prop. 2.6 of \cite{DKM2023arxiv}).
\end{rem} 

Given an AH manifold $(M,g)$ asymptotic to $(\mathring{M} ,\mathring{g})$ with diffeomorphism $\varphi$, we choose the boundary defining functions on $\overline M$ and $\mathring{\overline M}$ so that $\rho = \mathring{\rho} \circ \varphi$ and $r = \mathring{r} \circ \varphi$ on $N\setminus K$. Define the sets
\[
B_R = \{x\in {M} \mid {r}(x)<R \} \subset M, \qquad
\p B_R = \{x\in {M} \mid {r}(x)=R \} \subset{M},
\]
and
\[
\mathring{B}_R=\{x\in \mathring{M} \mid \mathring{r}(x)<R\}\subset\mathring{M},\qquad
\p\mathring{B}_R=\{x\in \mathring{M} \mid \mathring{r}(x)=R\}\subset\mathring{M}.
\]
For $R$ so large that $\varphi(\p B_R)=\p\mathring{B}_R$ for $R$, let
\begin{align*}
\m^\varphi_{{\rm ADM},\mathring{g}}(g,R)
&= \int_{\p\mathring{B}_R}(\divergence_{\mathring{g}}(\varphi_*g)-d\trace_{\mathring{g}}(\varphi_*g))(\nu_{\mathring{g}})\dv_{\mathring{g}},\\
RV^\varphi_{\mathring{g}}(g,R)
&= \int_{B_R}\dv_g-\int_{\mathring{B}_{R}}\dv_{\mathring{g}},
\end{align*}
where $\nu_{\mathring{g}}$ is the outward unit normal to $\p \mathring B_R$ in $(\mathring M,\mathring{g})$.

\begin{defn} \label{defn-VRmass}
Let $(M,\mathring{g})$ be asymptotically hyperbolic. 	The (Riemannian) {\em volume-renormalized mass} $\mathfrak{m}_{\rm{VR},\mathring{g}}(g)$ of $g$ with respect to $\mathring{g}$ is given by
\[
\mathfrak{m}_{\rm{VR},\mathring{g}}(g)=
\lim_{R \to \infty}
\big( \m_{{\rm ADM},\mathring{g}}(g,R)+2(n-1)RV_{\mathring{g}}(g,R) \big),
\]
where we omit reference to $\varphi$ as the definition is independent of $\varphi$ under rather mild assumptions, see \cite[Thm~3.14]{DKM2023arxiv}.
\end{defn}

It is also shown in \cite{DKM2023arxiv} that $\m_{VR,\mathring g}(g)$ is well-defined and finite provided that both $(\scal_{\mathring g}+n(n-1))$ and $(\scal_{g}+n(n-1))$ are integrable. In Section \ref{Sec-IDVR}, we generalise this Riemannian definition to the initial data setting.

\subsection{Hamiltonians and the phase space}

It is well-known that the Einstein equations can be recast as a Hamiltonian system. The standard ADM approach to this is to choose a time function $t$ and then write an $(n+1)$-dimensional Lorentzian metric $h$ in terms of a lapse function $N$, a shift vector $X$, and $n$-dimensional metrics $g$ induced on the hypersurfaces $\Sigma$ of constant $t$, as
\begin{equation}\label{eq-ADMsplit}
h = -N^2dt^2+g_{ij}(dx^i+X^idt)\otimes(dx^j+X^jdt).
\end{equation}
The Einstein equations is then be decomposed into a system of evolution equations and constraint equations on the hypersurface. Taking a Riemannian metric $g$ on the hypersurface as the canonical position variable for the Hamiltonian system, recall that the conjugate momentum $\pi$ to $g$ is the symmetric $2$-tensor density given in terms of the second fundamental form $K$ of $\Sigma$ by
\[
\pi^{ij} = \left(\trace_g(K)g^{ij}-g^{ik}g^{jl}K_{kl}\right)\dv_g.
\]
\begin{rem}
	Note that we follow the sign convention for $K$ used by Fischer and Moncrief \cite{FischerMoncrief2002}. 
\end{rem}

For the sake of presentation we will also make use of the de-densitized momentum $\overline\pi$ defined by $\pi=\overline\pi \dv_g$ throughout this article. The vacuum constraint equations are then given by $\Phi(g,\pi)=0$ where the constraint map $\Phi$ is defined by
\begin{equation}\label{eq-constraintmap}
\begin{split}
\Phi_0(g,\pi)
&=
\scal_g\,\dv_g+\left(\frac 1{n-1}(\trace_g(\overline\pi))^2-|\overline\pi|_g^2\right)\dv_g, \\
\Phi_i(g,\pi)
&= 
2g_{ik}\nabla_j\pi^{jk}.
\end{split}
\end{equation}
For a closed manifold $\Sigma$ the Hamiltonian is given by \cite{Gourgoulhon}
\[
\mathcal{H} = 
-\int_\Sigma \left(N\Phi(g,\pi)+X^i\Phi_i(g,\pi)\right),
\]
while for noncompact manifolds or manifolds with boundary an additional boundary term appears in $\mathcal H$. For example, for asymptotically flat manifolds this boundary term is proportional to the ADM mass.

The phase space for the Einstein equations is then a space $\mathcal P_0$ of pairs $(g,\pi)$, which can be viewed as a cotangent bundle over a manifold of Riemannian metrics on $\Sigma$. Given a point $(g_0,\pi_0)\in\mathcal P_0$ satisfying $\Phi(g_0,\pi_0)=0$ then the Hamiltonian flow in $\mathcal P_0$ starting from $(g_0,\pi_0)$ generates a curve $(g_t,\pi_t)\in\mathcal P_0$ satisfying $\Phi(g_t,\pi_t)=0$, corresponding to the Einstein evolution equations.

In our setting, we are interested in a class of initial data where $g$ is APE and $\pi$ has asymptotics motivated by the Milne foliation of Minkowski space. Let $\mathcal{K}(M)$ be the space of sections of the bundle $S^2 TM \otimes \Lambda^n M$ of symmetric contravariant $2$-tensor densities and let  
\begin{align*}
\mathcal K^{k-1,\alpha}_\delta(M)
&=
\Big\{ \pi\in \mathcal{K}(M) \mid \\
&\varphi_*\pi +(n-1)\mathring g^{-1}dV_{\mathring g}\in C^{k-1,\alpha}_\delta(S^2 T(\mathring M\setminus \mathring K) \otimes \Lambda^n (\mathring M\setminus \mathring K))
\Big\},
\end{align*}
where $\varphi$ is the diffeomorphism given by Definition \ref{defn-AH}.

We define the APE phase space as 
\[
\mathcal P^{k,\alpha}_{\delta}(M)
= \mathcal R^{k,\alpha}_\delta(M) \times \mathcal K^{k-1,\alpha}_\delta(M).
\]
Note that the definition of $\mathcal K^{k-1,\alpha}_\delta$ is independent of the choice of $\mathring g\in \mathcal R^{k,\alpha}_{\delta}$. 

\begin{defn}\label{def-APEdata}
We say an initial data set $(M,g,\pi)$ is APE if $(g,\pi)\in\mathcal P^{k,\alpha}_\delta(M)$.
\end{defn}

\section{Initial Data Definition of the Volume-Renormalized Mass} 
\label{Sec-IDVR}

In \cite{DKM2023arxiv}, we defined the volume-renormalized mass for APE manifolds and considered the scalar curvature to be bounded below by $-n(n-1)$. Such manifolds can be viewed as initial data $(M,g,\pi)$ for the Einstein equations satisfying the dominant energy condition with $\pi^{ij}=-(n-1)g^{ij}dV_g$. In particular, for this choice of $\pi$ the constraint map \eqref{eq-constraintmap} reduces to
\[ \begin{split}
\Phi_0(g,\pi)&=(\scal_g+n(n-1))\dv_g,\\
\Phi_i(g,\pi)&=0.
\end{split} \]
We view the definition given in \cite{DKM2023arxiv} as a Riemannian definition, and in this section we develop an initial data definition of the volume-renormalized mass.

In particular, we approach the volume-renormalized mass from the perspective of Michel's formulation of mass-like invariants \cite{Mic11}, and give a definition of volume-renormalised mass for APE initial data sets. We fix background initial data to be a given APE manifold $(M,\mathring g)$ equipped with conjugate momentum $\mathring\pi=-(n-1)\mathring g^{ij}dV_{\mathring g}$, and such that $\scal_{\mathring g}+n(n-1)$ is integrable. Throughout this section we consider initial data $(g,\pi)\in\mathcal P^{k,\alpha}_\delta$, and we use the convention that all indices are raised and lowered with the metric $\mathring g$.

Let $V=(N,X)$ consist of a bounded function $N$ and a bounded vector field $X$ on $M$. This implies that $\langle V,\Phi\rgp\rangle = N\Phi(g,\pi)+X^i\Phi_i(g,\pi)$ is integrable.

Linearizing the constraints about $\rgp$ gives
\begin{equation}\label{eq-linconstraintwithQ}
D\Phi{\rgp}(h,q) = \Phi(g,\pi) - \Phi\rgp + Q(h,q),
\end{equation}
where $(h,q)=(g-\mathring g,\pi-\mathring \pi)\in T_{(\mathring g,\mathring \pi)} \mathcal P^{k,\alpha}_\delta(M)$ is tangent to the APE phase space,
and where $Q(h,q)$ denotes a term which is quadratic in $(h,q)$. In particular, $Q(h,q)$ is in $L^1(M)$ for $(g,\pi)\in\mathcal P^{k,\alpha}_\delta$ with $\delta>\frac{n-1}{2}$. From \eqref{eq-linconstraintwithQ}, we see that if $\langle V,\Phi(g,\pi)\rangle\in L^1$ and $V \in L^\infty$ then $\langle V,D\Phi{\rgp}(h,q)\rangle\in L^1$ for $(h,q)\in T_{\rgp} \mathcal P^{k,\alpha}_\delta$. Furthermore, if $V$ is not bounded then stronger decay assumptions are required on the initial data to ensure that the contribution from $\langle V, Q \rangle$ is integrable.

We next have 
\begin{equation}\label{eq-Michel-relation}
\langle V,D\Phi{\rgp}(h,q)\rangle
= \divergence_{\mathring g}\mathbb{U}(V,h,q) 
+ \langle D\Phi{\rgp}^*(V),(h,q)\rangle,
\end{equation}
where
\[ \begin{split}
\mathbb{U}^i(V,h,q)
&= 
N\left(
(\mathring\nabla_jh^{ij} - \mathring\nabla^i\trace_{\mathring g}h)
-h^{ij}\mathring\nabla_jN
-\trace_{\mathring g}h\nabla^iN)
\right)dV_{\mathring g} \\
&\qquad
+ 2X^jq_j^i
+ 2X^j(n-1)h_{j}^i dV_{\mathring g}
- X^i(n-1)\trace_{\mathring{g}}(h)dV_{\mathring g}.
\end{split} \]
and the adjoint $D\Phi{\rgp}^*$ is given below. Integrating $\divergence\mathbb U$ over $M$ gives the flux integral of $\mathbb U$ at infinity. By choosing $V$ in the kernel of $D\Phi{\rgp}^*$ and $(h,q)=(g-\mathring g,\pi-\mathring \pi)$, this surface integral gives a well-defined mass-like invariant for $(g,\pi)$, provided that decay conditions are chosen such that the quadratic term $Q(h,q)$ is integrable. In particular, the asymptotics of elements of the kernel of $D\Phi{\rgp}^*$ govern the decay rates required for $(g,\pi)$ to ensure that $\divergence_{\mathring g}\mathbb{U}(V,h,q)$ is integrable, and thus obtain a mass-like invariant as a surface integral at infinity. 

Interestingly though, the combination on the right-hand side of \eqref{eq-Michel-relation} is integrable for any choice of bounded $V$, provided that the decay conditions on $g$ and $\pi$ ensure $Q(h,q)\in L^1$. That is, if $|g-\mathring g|_{\mathring g}^2$ and $|\pi-\mathring \pi|_{\mathring g}^2$ are in $L^1$ and $V$ is in $L^\infty$, then from \eqref{eq-linconstraintwithQ} and \eqref{eq-Michel-relation} we have that $(\divergence_{\mathring g}\mathbb{U}(V,h,q) + \langle D\Phi{\rgp}^*(V),(h,q)\rangle) \in L^1$ whenever $(\Phi(g,\pi) - \Phi\rgp) \in L^1$. We now show that from this observation one can recover the volume-renormalised mass.

The adjoint of the linearized constraint map at the reference data $\rgp$ is given by a standard computation, see for example \cite[Sec~IV~B]{Mic11},
\begin{equation} \label{eq-DPhistar}
\begin{split}
&\langle D\Phi_0{\rgp}^*(N),(h,q)\rangle \\
&\qquad = 
h_{ij}\left( \onab^i\onab^jN-\mathring g^{ij}\mathring \Delta N \right)dV_{\mathring g} \\
&\qquad\qquad
-N\trace_{\mathring g}h(n-1)(n-3)dV_{\mathring g}-2N\trace_{\mathring g}q , \\
&\langle D\Phi_i{\rgp}^*(X^i),(h,q)\rangle \\
&\qquad = 
(n-1)\trace_{\mathring g}h\onab_k X^k dV_{\mathring g}-2\mathcal L_X\mathring g_{ij}((n-1)h^{ij}dV_{\mathring g}+q^{ij}) \\
&\qquad =
\mathcal L_X\mathring g_{ij}
((n-1)(2h^{ij}-\frac12\trace_{\mathring g}h\,\mathring g^{ij})dV_{\mathring g}-2q^{ij}).
\end{split}
\end{equation}

In order to derive the volume-renormalized mass, we make the choice $N=1$ and $X=0$ in \eqref{eq-Michel-relation}. In the context of the Milne model, this corresponds to the unit normal to the hyperboloidal initial data slice, which is the velocity vector field for a family of comoving observers. From \eqref{eq-DPhistar}, we have
\begin{equation}\label{eq-N-1-Michel}
\langle D\Phi_0{\rgp}^*(1),(h,q)\rangle 
= -\trace_{\mathring g}h(n-1)(n-3)dV_{\mathring g} 
- 2\trace_{\mathring g}q.
\end{equation}
Consider the map $L(g,\pi)$ defined by 
\[
L(g,\pi) = \trace_g\pi\dv_g^{-1},
\]
where $\dv_g^{-1}$ is a scalar density of weight $-1$ that ``de-densitizes" $\pi$, and its linearization given by
\[
DL(g,\pi)(h,q)
= h_{ij}\pi^{ij}\dv_g^{-1} + \trace_gq\dv_g^{-1} - \frac12 \trace_g h \trace_g \pi\dv_g^{-1}.
\] 
By Taylor expanding this map, we can write
\[ \begin{split}
\trace_g\overline\pi dV_{\mathring g}
&=
\trace_{\mathring g}\mathring\pi 
+ h_{ij}\mathring\pi^{ij}
+ \mathring g_{ij}q^{ij}
- \frac12\trace_{\mathring g}h\trace_{\mathring g}\mathring\pi 
+ Q(h,q) \\
&=-
n(n-1)dV_{\mathring g} 
-(n-1)(1-\frac n2)\trace_{\mathring g}hdV_{\mathring g}
+\trace_{\mathring g}q 
+ Q(h,q)
\end{split} \]
where again $(h,q)=(g-\mathring g,\pi-\mathring \pi)$ and $Q(h,q)$ denotes a term which is quadratic in $(h,q)$ and thus is integrable, which may change from line to line. Further, $\overline\pi=\pi\dv_g^{-1}$ defines the de-densitized momentum $\pi$. Since
\[ 
dV_g = dV_{\mathring g} + \frac12 \trace_{\mathring g}h dV_{\mathring g} + Q(h,q),	
\]
we can then write
\[
\trace_{\mathring g}q
=
\left( \trace_g\overline\pi+n(n-1) \right)\dv_{\mathring g}-(n-1)(n-2)\left(\dv_g-\dv_{\mathring g}\right)+Q(h,q).
\] 
Inserting this into \eqref{eq-N-1-Michel} gives
\[ \begin{split}
\langle D\Phi_0{\rgp}^*(1),(h,q)\rangle
&=
- 2(n-1)(n-3)\left(\dv_g-\dv_{\mathring g}\right) \\
&\qquad
- 2(\trace_g\overline\pi+n(n-1))dV_{\mathring g} \\
&\qquad 
+2(n-1)(n-2)\left(\dv_g-\dv_{\mathring g}\right) dV_{\mathring g} + Q(h,q) \\
&= 
2(n-1)\left(\dv_g-\dv_{\mathring g}\right) \\
&\qquad
+2(\trace_g\overline\pi-n(n-1))dV_{\mathring g} + Q(h,q)
\end{split} \]
This leads to the following definition of the volume-renormalized mass of an initial data set.
\begin{defn}
	The volume-renormalized mass of an APE initial data set $(M,g,\pi)$ with respect to $(M,\mathring g, \mathring \pi=-(n-1)\mathring g^{-1}\dv_{\mathring g})$ is given by
\[ \begin{split}
	\m_{VR,\mathring g}(g,\pi)
	= 
	\lim_{R \to \infty}
	\Big(& 
	\m_{{\rm ADM},\mathring{g}}(g,R)+2(n-1)RV_{\mathring{g}}(g,R) \\
	& +2\int_{B_R} \left( \trace_g\overline\pi+n(n-1) \right)  dV_{\mathring g} \Big)
\end{split} \] 
\end{defn}
Furthermore, the argument above has proven:
\begin{thm} \label{thm-well-defined}
	Let $(M,g,\pi)\in\mathcal P^{k,\alpha}_{\delta}$ be an APE initial data set with $k\geq2$ and $\delta>\frac{n-1}{2}$, asymptotic to $(\mathring M, \mathring g, \mathring\pi=(n-1)dV_{\mathring g})$. Then if $\Phi(g,\pi)\in L^1$, $\m_{VR,\mathring g}(g)$ is well-defined and finite.
\end{thm}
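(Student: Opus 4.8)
The plan is to organize the computation carried out just above into a limiting argument along the exhaustion $\{B_R\}$. First I would fix $V=(N,X)=(1,0)$, so that $\langle (1,0),\Phi(g,\pi)\rangle=\Phi_0(g,\pi)$ and $\langle (1,0),D\Phi\rgp(h,q)\rangle$ is the $0$--component of the linearized constraint, with $(h,q)=(g-\mathring g,\pi-\mathring \pi)$. The first step is to check that $\langle (1,0),D\Phi\rgp(h,q)\rangle\in L^1(M)$. By \eqref{eq-linconstraintwithQ} this quantity equals $\Phi_0(g,\pi)-\Phi_0\rgp+Q(h,q)$; here $\Phi_0(g,\pi)\in L^1$ is the hypothesis, $\Phi_0\rgp=(\scal_{\mathring g}+n(n-1))\,dV_{\mathring g}\in L^1$ is the standing assumption on the background data, and $Q(h,q)\in L^1$ because $h\in C^{k,\alpha}_\delta$ and $q\in C^{k-1,\alpha}_\delta$ force every quadratic term (including those with up to two derivatives on $h$ and one on $q$) to be $O(e^{-2\delta r})$ pointwise, while the volume element of an AH metric grows like $dV_{\mathring g}\sim e^{(n-1)r}\,dr$ in the radial variable $r$; thus $\int_M|Q(h,q)|\,dV_{\mathring g}\lesssim\int e^{(n-1-2\delta)r}\,dr<\infty$ exactly because $\delta>\tfrac{n-1}{2}$.

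Next I would invoke the Michel identity \eqref{eq-Michel-relation} with $V=(1,0)$, integrate over $B_R$, and apply the divergence theorem, obtaining
\[
\int_{B_R}\langle (1,0),D\Phi\rgp(h,q)\rangle
=\int_{\p B_R}\mathbb{U}\big((1,0),h,q\big)(\nu_{\mathring g})\,dV_{\mathring g}
+\int_{B_R}\langle D\Phi_0\rgp^*(1),(h,q)\rangle .
\]
I would then identify the two terms on the right with the ingredients of the volume-renormalized mass. From the explicit formula for $\mathbb{U}$, the choice $N=1$, $X=0$ kills every term except $\mathring\nabla_jh^{ij}-\mathring\nabla^i\trace_{\mathring g}h$, and since $\mathring\nabla\mathring g=0$ and $\trace_{\mathring g}\mathring g=n$ this is $(\divergence_{\mathring g}g-d\trace_{\mathring g}g)^i\,dV_{\mathring g}$, so after the identification by $\varphi$ the boundary flux is precisely $\m_{{\rm ADM},\mathring g}(g,R)$. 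For the bulk term I would use \eqref{eq-N-1-Michel} together with the Taylor expansions of $L(g,\pi)=\trace_g\pi\,dV_g^{-1}$ and of $dV_g$ already performed above; this rewrites $\langle D\Phi_0\rgp^*(1),(h,q)\rangle$ in the form $2(n-1)(dV_g-dV_{\mathring g})+2(\trace_g\overline\pi+n(n-1))\,dV_{\mathring g}+Q(h,q)$, whose integral over $B_R$ is $2(n-1)RV_{\mathring g}(g,R)+2\int_{B_R}(\trace_g\overline\pi+n(n-1))\,dV_{\mathring g}+\int_{B_R}Q(h,q)$, using $\int_{B_R}(dV_g-dV_{\mathring g})=RV_{\mathring g}(g,R)$.

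Putting these together yields
\[
\m_{{\rm ADM},\mathring g}(g,R)+2(n-1)RV_{\mathring g}(g,R)+2\!\int_{B_R}\!\big(\trace_g\overline\pi+n(n-1)\big)\,dV_{\mathring g}
=\int_{B_R}\langle (1,0),D\Phi\rgp(h,q)\rangle-\int_{B_R}Q(h,q),
\]
and both integrals on the right converge absolutely as $R\to\infty$ --- the first by the first step, the second because $Q(h,q)\in L^1$. Hence the left-hand side converges to a finite limit, which is exactly the assertion that $\m_{VR,\mathring g}(g,\pi)$ is well-defined and finite; independence of $\varphi$ is inherited from \cite{DKM2023arxiv} and Michel's framework \cite{Mic11}. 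The step I expect to carry the real content is the $L^1$--control of the quadratic remainder $Q(h,q)$: one must verify that every term dropped in the linearizations of $\Phi$, of $L$, and of the volume form is genuinely quadratic and $O(e^{-2\delta r})$ in the weighted norms, so that it pairs integrably against the exponentially growing volume element precisely under $\delta>\tfrac{n-1}{2}$. A secondary, purely bookkeeping task is to thread the sign and normalization conventions of \eqref{eq-DPhistar} through carefully, so that the bulk contribution appears with the coefficient matching the definition of $\m_{VR,\mathring g}(g,\pi)$ and the boundary flux appears as $\m_{{\rm ADM},\mathring g}(g,R)$ without spurious factors.
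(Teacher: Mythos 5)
Your proposal is correct and follows essentially the same route as the paper: the paper's proof of Theorem \ref{thm-well-defined} is exactly the preceding derivation in Section \ref{Sec-IDVR}, namely choosing $V=(1,0)$ in Michel's identity \eqref{eq-Michel-relation}, identifying the flux of $\mathbb{U}$ with $\m_{{\rm ADM},\mathring g}(g,R)$ and the bulk term $\langle D\Phi_0\rgp^*(1),(h,q)\rangle$ with $2(n-1)(dV_g-dV_{\mathring g})+2(\trace_g\overline\pi+n(n-1))dV_{\mathring g}$ up to an $L^1$ quadratic remainder, with integrability of $Q(h,q)$ coming from $\delta>\tfrac{n-1}{2}$ against the $e^{(n-1)r}$ volume growth. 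You also correctly isolate the two points carrying the real content (the $L^1$ control of $Q$ and the sign bookkeeping in \eqref{eq-DPhistar}), so nothing essential is missing.
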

Alternatively, writing in terms of the mean curvatures $k=\trace_g(K)$ and $\mathring k=\trace_{\mathring g}\mathring K$ we have that
\[
\lim_{R \to \infty}
\left( 
\m_{{\rm ADM},\mathring{g}}(g,R)
+2(n-1)\left( RV_{\mathring{g}}(g,R)
+\int_{B_R} \left( k-\mathring k \right)   \,dV_{\mathring g} \right)
\right)
\] 
is well-defined and finite. In the Riemannian case, the expression $k-\mathring k$ is identically zero so the quantity here reduces to the Riemannian volume-renormalized mass. 

\begin{rem}\label{rem-compatibility}
If $(\trace_g\overline\pi+n(n-1))\in L^1$, then the condition $\Phi(g,\pi)\in L^1$ is equivalent to $(\scal_g+n(n-1))\in L^1$ for the decay conditions assumed here, which was the condition in the Riemannian case \cite[Thm~3.1]{DKM2023arxiv}. This can be seen directly by writing $h=\overline\pi+(n-1)g^{-1}$ and noticing
\begin{equation*} 
\Phi(g,\pi)=\scal_g+n(n-1)+\frac1{n-1}(\trace_gh)^2-|h|_g^2-2\trace_gh.
\end{equation*}
\end{rem}

\begin{rem}
All choices of lapse and shift $(N,X)\in L^\infty$ will give a well-defined quantity, so one could then ask for natural vectors fields $X$ on $(M,\mathring g)$ to use with $N=0$ to define a kind of momentum quantity. In light of \eqref{eq-DPhistar}, a natural choice for $X$ is to take it to be a conformal Killing field for $(M,\mathring g)$. However, the conformal Killing fields for hyperbolic space are growing at infinity and therefore by the discussion above, we would require stronger decay conditions to obtain a finite momentum quantity from this argument. For this reason we focus only on the volume-renormalized mass quantity obtained above from the choice $N=1$ and vanishing $X$. 
\end{rem}

\section{Hamiltonian Reduction on a Compact Manifold with Boundary}\label{Sec-CompactHam}

In this section we discuss the reduced phase space and reduced Hamiltonian for a compact manifold with boundary.

\subsection{The reduced phase space}

The usual Hamiltonian for general relativity is a constrained Hamiltonian system, and solutions to Einstein's equations are given by curves in the constraint submanifold. Here we construct a reduced phase space $\mathcal P_{red}$ and a corresponding reduced (unconstrained) Hamiltonian, where each point in $\mathcal P_{red}$ generates a curve of solutions to the Einstein equations through the Hamiltonian flow.

Let $\Sigma$ be a manifold with boundary equipped with a background metric $\mathring{g}$.
We define the Yamabe invariant $\mathcal{Y}_g$ of a metric $g$ on $\Sigma$ as in \cite[Section~2]{HolstTsogtgerel2013},
\[
\mathcal{Y}_g = \inf \left\{ E(\phi) \mid \| \phi \|_{L^{\frac{2n}{n-2}}(M,g)} = 1 \right\}
\] 
where 
\[
E(\phi) 
= \int_{\Sigma} \left( |d\phi|_g^2 + \frac{n-2}{4(n-1)} \scal_g \phi^2 \right) \dv_{g}
+ \frac{n-2}{2} \int_{\partial\Sigma} H_g \phi^2 \dv_{g}
\]
and $H_g$ denotes the mean curvature of $\partial \Sigma$ with respect to the outward pointing normal.

\begin{defn}
The full phase space is defined by 
\[
\mathcal{P}(\Sigma) 
= \left\{ 
(g,\pi) \in C^{\infty}(S^2_+T^*\Sigma) \times C^{\infty}(S^2 T\Sigma \otimes\Lambda^n\Sigma) 
\mid 
g|_{\partial \Sigma}=\mathring{g}|_{\partial \Sigma}
\right\},
\]
where $S^2_+ T^*M$ is the bundle of positive definite symmetric bilinear forms on $TM$. The CMC phase space is 
\[
\mathcal{P}_{CMC}(\Sigma)
=
\left\{ (g,\pi)\in\mathcal P(\Sigma) \mid
\mathcal{Y}_g < 0, \trace_g(\overline\pi)=-n(n-1), \Phi(g,\pi)=0 \right\}.
\]
\end{defn}
The CMC phase space is the submanifold of the full phase space consisting of CMC vacuum initial data for the Einstein equations with mean curvature $\tau=-n$ and negative Yamabe invariant. 

Vacuum solutions of the Einstein equations with CMC foliations generate curves in $\mathcal P_{CMC}(\Sigma)$ as follows. Throughout we use the hat $\widehat\cdot$ to denote `physical' quantities, as opposed to rescaled versions of them used to parametrise the phase space. Let $(\Sigma\times[t_1,t_2],h)$ be a spacetime satisfying the Einstein vacuum equations, which we write via the ADM decomposition \eqref{eq-ADMsplit} as
\begin{equation} \label{eq:ADM_decomp}
\begin{split}
h &= 
-\widehat N^2dt^2 + \hg_{ij}(dx^i+\widehat X^idt)\otimes(dx^j+\widehat X^jdt) \\
&= 
-N^2dt^2 + t^2g_{ij}(dx^i+t^{-1}X^idt)\otimes(dx^j+t^{-1}X^jdt),
\end{split}
\end{equation}
where we introduced the rescaled quantities $g$ and $X$ defined by $\hg = t^2 g$ and $\widehat X = t X$ on the slice of constant $t$. We do not rescale $\widehat N$ but we write $\widehat N = N$.

For the data $(g,N,X)$ we assume the boundary conditions
\[
g=\mathring{g}, \qquad N=1, \qquad X = \mathring X,\qquad \divergence_g(X) = 0,
\]
on $\partial \Sigma$, where $\mathring X$ is some fixed vector field defined in a neighbourhood of $\partial \Sigma$.

We assume that the slicing is chosen so that each slice of constant $t$ corresponds to a leaf of the CMC foliation. We also assume that the metric $\hg$ has negative Yamabe invariant $\mathcal{Y}_{\hg} < 0$, which ensures that $\tau$ cannot vanish. The assumption on the foliation gives a relationship between $t$ and the mean curvature $\tau$ of the leaf $\Sigma_t$. We have
\begin{equation}\label{eq-evo}
\p_t\hg_{ij}=-2N\widehat K_{ij}+\mathcal L_{t^{-1}X}\hg_{ij},
\end{equation}
where $\widehat K$ is the second fundamental form of $(\Sigma_t,\widehat g)$ in the spacetime. The trace of this equation gives
\[
\trace_{\hg}\p_t\hg = -2N\tau+2\divergence_{\hg}({t^{-1}X}).
\]
Using the fact that $\hg=t^2g$, and the boundary conditions on $\p\Sigma$ we obtain 
$2tn = -2\tau$, or equivalently the relationship
\[
\tau = -n/t
\]
on $\p\Sigma$. Since $\tau$ is constant, this relationship is valid on all of $\Sigma$. We will later show that $N$ is uniquely determined by the CMC condition but $X$ is still freely specifiable, apart from the boundary conditions.

We define the momentum $\widehat{\pi}$ conjugate to $\widehat{g}$ by 
\[
\widehat{\pi}^{ij} =
\left(\trace_{\widehat{g}}(\widehat K)\widehat{g}^{ij}-\widehat{g}^{ik}\widehat{g}^{jl}\widehat K_{kl}\right)\dv_{\widehat{g}}.
\]
Since we have a solution to the vacuum Einstein equations, $(\hg,\widehat{\pi})$ is a curve in $\mathcal{P}(\Sigma)$ satisfying the constraint equations $\Phi(\hg,\widehat{\pi})=0$. Define the rescaled momentum $\pi$ by $\widehat{\pi}=t^{n-3}\pi$. For the rescaled data $(g,\pi)$ we have
\[
\Phi_0(\hg,\widehat{\pi})=t^{n-2}\Phi_0(g,\pi)=0, 
\quad
\Phi_i(\hg,\widehat{\pi})=t^{n-1}\Phi_i(g,\pi)=0.
\]
Further, we have
\[
\trace_{\hg}(\widehat{\pi})=(n-1)\trace_{\hg}(\widehat K)\dv_{\hg}=-n(n-1)t^{n-1}\dv_{g}
\]
so that
\[
\trace_{g}({\pi})=-n(n-1)\dv_{g}.
\]
We conclude that the pair $(g,\pi)$ is a curve in $\mathcal P_{CMC}(\Sigma)$.

\begin{defn}
The reduced phase space is defined by
\[
{\mathcal P}_{red}(\Sigma)
=
\left\{ (\gamma,p)\in\mathcal P(\Sigma) \mid 
\scal_\gamma=-n(n-1), \operatorname{div}_{\gamma}p=0, \trace_{\gamma}p=0 
\right\} .
\]
\end{defn}

By the conformal method we can to each $(\gamma,p)\in\mathcal P_{red}(\Sigma)$ associate a unique $(g,\pi) \in \mathcal P_{CMC}(\Sigma)$, see \cite[Section~1.2]{HolstTsogtgerel2013}. This is given by 
\begin{equation} \label{eq-redvars}
g_{ij} = \varphi^{\frac{4}{n-2}}\gamma_{ij}, 
\quad 
\pi^{ij} = \varphi^{-\frac{4}{n-2}} p^{ij} 
- (n-1) \varphi^{-\frac{4}{n-2}} \gamma^{ij} \dv_g,
\end{equation}
where $\varphi$ satisfies the Lichnerowicz equation 
\begin{equation} \label{eq-lichn}
-\frac{4(n-1)}{n-2} \Delta_\gamma \varphi 
-n(n-1) \varphi
+ n(n-1) \varphi^{\frac{n+2}{n-2}}
- |p|_\gamma^2 \varphi^{-\frac{3n-2}{n-2}}
= 0
\end{equation}
with the boundary condition $\varphi=1$ on $\partial \Sigma$. 

From \cite[Thm~6.2, Lem~6.3]{HolstTsogtgerel2013} we know that the Lichnerowicz equation has a solution, which by \cite[Thm~4.3]{HolstTsogtgerel2013} is unique. Thus the map
\[
\Psi: \mathcal P_{red}(\Sigma) \to \mathcal P_{CMC}(\Sigma), \qquad \Psi(\gamma,p) = (g,\pi)
\]
given by \eqref{eq-redvars} and \eqref{eq-lichn} is injective. Since $\mathcal P_{CMC}(\Sigma)$ consists of metrics with negative Yamabe invariant it is easy to see that $\Psi$ is also surjective. Furthermore, it can be shown that the map $\Psi$ is a diffeomorphism.

\subsection{The reduced Hamiltonian}\label{Sec-RedHam}

In order to define the reduced Hamiltonian, we begin with the standard approach of deriving the Hamiltonian from the gravitational action. For background see for example, \cite{Poisson2004}.
The standard expression for the gravitational action (Lagrangian) on $\mathcal V$ is
\begin{equation}\label{eq-GHY}
S(h) = 
\int_{\mathcal V}\scal_h \dv_h 
+ 2\int_{\partial \mathcal V} {\mathcal K}_h \,dS_h,
\end{equation}
where ${\mathcal K}_h$ is the mean curvature of $\partial V$. The surface integral is known as the Gibbons--Hawking--York boundary term and is necessary for the variational principle to recover the Einstein equations for a compact domain with boundary. 

For a spacetime of the form \eqref{eq:ADM_decomp}, standard textbook derivations (for example, \cite{Gourgoulhon}) 
yield the Hamiltonian
\begin{equation}\label{eq-Ham0}
\begin{split}
\mathcal H(\hg,\widehat\pi)
&=
-\int_\Sigma \left( N\Phi(\hg,\widehat\pi)+\widehat X^i\Phi_i(\hg,\widehat\pi) \right) \\
&\qquad 
-2\int_{\partial \Sigma} 
\left( NH_{\hg} - \hg_{ki}\hg_{lj}\widehat X^i\widehat{\overline\pi}^{kl}\widehat\nu^j \right) \,dS_{\hg},
\end{split}
\end{equation}
where $(\hg,\widehat\pi)$ is the initial data induced on $\Sigma$, $\Phi$ is the constraint map \eqref{eq-constraintmap}, $H_{\hg}$ is the mean curvature of $\partial \Sigma$ in $\Sigma$ with respect to the outward pointing unit normal $\widehat\nu$ with respect to $\hg$, and $\widehat{\overline\pi}$ is the de-densitized quantity defined by $\widehat{\overline\pi}=\widehat \pi \dv_{\hg}^{-1}$. Note that the boundary condition $g=\mathring g$ on $\partial\Sigma$ ensures that $\mathcal H$ defined by \eqref{eq-Ham0} generates the correct equations of motion. That is, $D\mathcal H_{(g,\pi)}(h,q)=-\int_\Sigma D\Phi_{(g,\pi)}^*(N,X)\cdot(h,q)$ from which it is readily checked that Hamilton's equations precisely agree with the Einstein evolution equation.
 
Through the Legendre transform, we can therefore write \eqref{eq-GHY} as 
\begin{equation} \label{eq-spacetimeaction}
\begin{split}
S(h) &= 
\int_{t_1}^{t_2} \Biggl\{ \int_{\Sigma}
\left(\widehat\pi^{ij}\partial_t \hg_{ij} 
+ N\Phi_0(\hg,\widehat\pi) + \widehat X^i\Phi_i(\hg,\widehat\pi)\right) \\
&\qquad\qquad
+2\int_{\p \Sigma}NH_{\hg} dS_{\hg}-2\int_{\p \Sigma} \hg_{ki}\hg_{lj}\widehat X^i\widehat{\overline\pi}^{kl}\widehat\nu^jdS_{\hg} \Biggr\}\,dt.
\end{split}
\end{equation}
Since we consider vacuum solutions of the Einstein equations, this expression reduces to
\begin{equation} \label{eq-vacuum-ham}
S(h) = 
\int_{t_1}^{t_2} \Biggl\{ 
\int_{\Sigma} \widehat\pi^{ij}\partial_t \hg_{ij}  
+ 2\int_{\p \Sigma} H_{\hg} dS_{\hg}
- 2\int_{\p \Sigma} \mathring{X}^i\widehat{\overline\pi}_{ij}\nu^jdS_{\hg} 
\Biggr\} \,dt,
\end{equation}
where we have also made use of the boundary conditions for $(N,\widehat X)$. We will next use \eqref{eq-redvars} to write the Hamiltonian \eqref{eq-vacuum-ham} in terms of the reduced variables $\gamma$ and $p$, which parametrise the CMC constraint manifold. We have
\[ \begin{split}
\widehat\pi^{ij}\partial_t \hg_{ij} 
&= 
t^{n-3} \varphi^{-4/(n-2)} p^{ij} \varphi^{4/(n-2)} \partial_t(t^2\gamma_{ij}) \\
&\qquad
+ t^{n-3}\varphi^{-4/(n-2)} p^{ij} \partial_t (\varphi^{4/(n-2)}) t^2 \gamma_{ij} \\
&\qquad 
- t^{n-3} (n-1)\varphi^{-4/(n-2)} \gamma^{ij} \partial_t(\hg_{ij}) \dv_{g} \\
&= \text{\big[since $\trace_\gamma p = 0$\big]} \\
&=
t^{n-1} p^{ij} \partial_t \gamma_{ij} 
- t^{n-3} (n-1) \varphi^{-4/(n-2)} \gamma^{ij} \partial_t(\hg_{ij}) \dv_{g} \\
&=
t^{n-1} p^{ij} \partial_t \gamma_{ij} 
- t^{-1} (n-1) \hg^{ij} \partial_t(\hg_{ij}) \dv_{\hg} \\
&=
t^{n-1} p^{ij} \partial_t \gamma_{ij} 
- t^{-1} (n-1) \hg^{ij} \partial_t(\hg_{ij}) \dv_{\hg} \\
&= \text{\bigg[since $\partial_t \dv_{\hg} = \frac{1}{2} \trace_{\hg} (\partial_t \hg) \dv_{\hg}$\bigg]} \\
&= 
t^{n-1}p^{ij} \partial_t \gamma_{ij} 
- 2t^{-1}(n-1) \partial_t\dv_{\hg} \\
&= 
t^{n-1}p^{ij}\p_t \gamma_{ij} + 2(n-1)\p_t(t^{-1}) \dv_{\hg} \\ 
&\qquad
-2(n-1)\p_t(t^{-1}\dv_{\hg})\\
&= 
t^{n-1}p^{ij}\p_t \gamma_{ij} 
-2(n-1)t^{n-2}\varphi^{2n/(n-2)} \dv_{\gamma} \\
&\qquad 
-2(n-1)\p_t(t^{-1}\dv_{\hg}).
\end{split} \]
Note here that the final term in the above expression above will only contribute a boundary term to \eqref{eq-spacetimeaction} at the $t=t_0$ and $t=t_1$ spacelike boundary components, so we discard this term without changing the equations of motion.

From \eqref{eq-vacuum-ham}, we write the Lagrangian as
\[\begin{split}
S(h) &=
\int_{t_1}^{t_2} \Biggl\{ 
\int_{\Sigma_t} 
\left( t^{n-1}p^{ij}\partial_t(\gamma_{ij}) - 2(n-1)t^{n-2}\varphi^{2n/(n-2)} \right)
\dv_{\gamma}\\
&\qquad\qquad\qquad
+ 2\int_{\p \Sigma_t} H_{\hg} \, dS_{\hg}
- 2\int_{\p \Sigma_t} \hg_{ki}\hg_{lj}t^{-1}\mathring X^i\widehat{\overline\pi}^{kl}\widehat\nu^j \, dS_{\hg} 
\Biggr\} \,dt,
\end{split} \]
We can directly read off that the conjugate momentum to $\gamma$ is $t^{n-1}p^{ij}$ and then by Legendre transform we have the reduced Hamiltonian
\begin{equation}\label{eq-RedHam0}
\begin{split}
\mathcal{H}^0_{red}(\gamma,p) 
&=
- 2\int_{\p \Sigma_t} H_{\hg} \, dS_{\hg} 
+ 2\int_{\p \Sigma_t} \hg_{ki}\hg_{lj}t^{-1}\mathring X^i\widehat{\overline\pi}^{kl}\widehat\nu^j \, dS_{\hg} \\
&\qquad
+ 2(n-1)t^{n-2}\int_{\Sigma_t} \varphi^{2n/(n-2)} \dv_{\gamma} \\
&=
t^{n-2}\Biggl(
- 2\int_{\p \Sigma_t}H_{g} \, dS_{g} 
+ 2\int_{\p \Sigma_t} g_{ki}g_{lj}\mathring X^i\overline\pi^{kl}\nu^j \, dS_{g} \\
&\qquad
+2(n-1)\int_{\Sigma_t} \dv_{g}\Biggr).
\end{split}
\end{equation}
We can add any constant to the Hamiltonian without changing the equations of motion. It is natural to use this freedom to ensure that the Hamiltonian for some fixed reference data evaluates to zero. We evaluate the reduced Hamiltonian at data corresponding to $\mathring g$, which by a mild abuse of notation is now taken to be any fixed metric with the same boundary conditions as $g$, and $\mathring \pi=-(n-1)\mathring g^{-1}dV_{\mathring g}$ then subtract this from \eqref{eq-RedHam0} to arrive at
\begin{equation} \label{eq-RedHamComp}
\begin{split}
&\mathcal H_{red}(\gamma,p) \\
&\qquad 
=
t^{n-2}\Biggl(2\int_{\p \Sigma_t}\left(H_{\mathring g}-H_{g}\right)\, dS_{\mathring g}+2(n-1)\int_{\Sigma_t}\left(\dv_{g}-\dv_{\mathring g} \right) \\
&\qquad\qquad\qquad\qquad
+2\int_{\partial \Sigma} 
\mathring g_{ki}\mathring g_{lj}\mathring X^i(\overline\pi^{kl}+(n-1)\mathring g^{kl})\nu^j \,dS_{\mathring g} \Biggr),
\end{split}
\end{equation}
This can be understood as a quasi-local volume-renormalised mass. Note that the choice of reference data being subtracted is modelled on the canonical leaves of the Milne model.

\section{Hamiltonian Reduction for Asymptotically Milne-Like Spacetimes}
\label{sec-hamredmilne}

We now develop a definition of the reduced Hamiltonian for a class of noncompact manifolds via a limiting process. In particular, we consider spacetimes that are asymptotic to a generalized Milne model. For this we first make some definitions.

\subsection{Milne-like spacetimes}

The Milne model is an early cosmological model obtained by a reparametrization of the interior of a lightcone in Minkowski space. In standard coordinates it is given by
\begin{equation}\label{eq-Milnemetric}
h = -dt^2+t^2 g_{\rm hyp} ,
\end{equation}
where $g_{\rm hyp}$ is the standard $n$-dimensional hyperbolic metric with constant negative curvature equal to $-1$. Each slice of constant $t$ is a hyperbolic metric with different curvature, all converging to the same section of future null infinity. The mean curvature of each such leaf is constant and equal to $-\frac{n}{t}$.

\begin{figure}[H]\label{fig-milne}
	\centering
	\begin{tikzpicture}[scale=3]

		\coordinate (O) at ( 0, 0);
		\coordinate (S) at ( 0,-1);
		\coordinate (N) at ( 0, 1);
		\coordinate (E) at ( 1, 0);
		\draw[thick] (N) -- (E) -- (S) -- cycle;

		\newcommand{\scri}{\mathscr{I}}
		\node[right] at (E) {$i^0$};
		\node[above] at (N) {$i^+$};
		\node[below] at (S) {$i^-$};
		\node[above, rotate=90] at (O) {\small{$r=0$}};
		\node[above right] at (0.5,0.5) {$\scri^+$};
		\node[below right] at (0.5,-0.5) {$\scri^-$};

		\tikzset{declare function={
				T(\t,\r)  = {\fpeval{1/pi*(atan(sqrt(\t**2+\r**2)+\r) + atan(sqrt(\t**2+\r**2)-\r))}};
				R(\t,\r)  = {\fpeval{1/pi*(atan(sqrt(\t**2+\r**2)+\r) - atan(sqrt(\t**2+\r**2)-\r))}};
		}}

		\message{Drawing time surfaces.^^J}
		\def\Nlines{6} 
		\newcommand\samp[1]{ tan(90*#1) } 
		\foreach \i [evaluate={\t=\i/(\Nlines+1);}] in {0,...,\Nlines}{
			\message{Drawing i=\i...^^J}
			\draw[line width=0.3,samples=30,smooth,variable=\r,domain=0.001:1]
			plot({ R(\samp{\t},\samp{\r}) }, { T(\samp{\t},\samp{\r}) });
		}
		
	\end{tikzpicture}
\caption{Penrose diagram for Minkowski space showing the Milne model foliated by hyperbolic metrics that all intersect the same section of $\mathscr{I}^+$.}
\end{figure}

It is straightforward to check that $h$ is still Ricci flat if we replace $g$ in \eqref{eq-Milnemetric} with any other Einstein metric satisfying $\ric_{g}=-(n-1)g$. That is, it is a solution to the vacuum Einstein equations. Note that this need not be a metric on $\mathbb R^n$. This motivates the following definition.

\begin{defn}
Let $(M,g)$ be a conformally compact $n$-dimensional Einstein manifold satisfying $\ric_{g}=-(n-1)g$. If $V=I\times M$ for some interval $I$ and
\[
h = -dt^2 + t^2 g
\]
we will say $(V,h)$ is a \textit{Milne-like spacetime}. 
\end{defn}
We remark that Milne-like spacetimes are continuously self-similar Ricci flat Lorentzian manifolds \cite{FischerMoncrief2002}.

We are interested in spacetimes that are asymptotic to Milne-like spacetimes in the following sense. 
\begin{defn}\label{defn-asympmilne}
Fix a Milne-like spacetime $(\mathring V,\mathring h)$ with associated Einstein manifold $(\mathring M,\mathring g)$. Consider another spacetime $(V=I\times M,h)$ given by the rescaled ADM decomposition \eqref{eq:ADM_decomp},
\begin{equation}\label{eq-asympmilne}
h = -N^2dt^2 + t^2g_{ij}(dx^i+t^{-1}X^idt)\otimes(dx^j+t^{-1}X^jdt),
\end{equation}
using the same $t$ coordinate on $I$. We say a spacetime is $(V,h)$ \textit{asymptotically Milne-like} if it is of the form \eqref{eq-asympmilne} and satisfies the following properties:
\begin{itemize}
\item There exists a diffeomorphism $\varphi:M\setminus K\to \mathring M\setminus \mathring K$ for compact sets $K$ and $\mathring K$,
\item $g-\varphi^*(\mathring g)\in C^{k,\alpha}_{\delta}$,
\item $N-1\in C^{k,\alpha}_\delta$,
\item $X \in C^{k,\alpha}_\delta$,
\item $\partial_t g \in C^{k,\alpha}_\delta$.
\end{itemize}
\end{defn}

The condition that $X$ decays at infinity is required so that the final term in \eqref{eq-RedHamComp} is finite when $\Sigma$ is taken to be an APE manifold and the boundary term is defined in an appropriate limiting sense. Furthermore, this combined with the condition on $\partial_t g$ imposes a decay condition on $\pi$ via the evolution equation \eqref{eq-evo}. Specifically, the data $(g,\pi)$ determined by the $t=1$ slice is APE initial data in the sense of Definition~\ref{def-APEdata}, with the same assumptions on $\delta,k$ and $\alpha$. That is, $\pi+(n-1)\mathring g^{-1}dV_{\mathring g}\in C^{k-1,\alpha}_{\delta}$. Moreover, these decay conditions are those required for $\m_{VR,\mathring g}(g,\pi)$ to be well-defined, see Theorem~\ref{thm-well-defined}.

With the above definitions in mind, we define the reduced phase space in the non-compact setting as
\begin{align*}
	\mathcal P_{red}(M)&=\{(\gamma,p)\mid(\gamma-\varphi^*(\mathring g))\in C^{k,\alpha}_\delta(S^2_+(T^*M)), \scal_{g}=-n(n-1),\\
	&\quad p\in C^{k-1,\alpha}_\delta(S^2(TM)\otimes \Lambda^n),\trace_{\gamma}(p)=0, \divergence_{\gamma}(p)=0   \}.
\end{align*}

\subsection{Normalization and limiting process}

We now would like to develop the reduced Hamiltonian for asymptotically Milne-like spacetimes by considering the reduced Hamiltonian developed in Section~\ref{Sec-CompactHam} on bounded domains in such spacetimes and taking a limit. We fix a Milne-like reference spacetime $(V=I \times M,\mathring h)$ with
\[
\mathring h = -dt^2+t^2\mathring g.
\]
Consider an exhaustion $M=\bigcup_{r\to\infty}\Sigma_r$ of $M$ by compact manifolds $\Sigma_r$ with boundary $\p\Sigma_r$ so that near infinity the boundaries $\p\Sigma_r$ foliate the asymptotic end. For each $\Sigma_r$ we can construct a reduced phase space and reduced Hamiltonian as in Section~\ref{Sec-CompactHam} using $\mathring g|_{\Sigma_r}$ as the reference metric. We first show that the mean curvature boundary term can be replaced with an ADM-like mass integral. For metrics $g$ and $\mathring g$ with $g|_{\p \Sigma_r} = \mathring g|_{\p \Sigma_r}$ we define
\[
\m_{ADM,\mathring g}(g,\partial \Sigma_r)
=
\int_{\partial \Sigma_r}
\left(\mathring g^{ij}\mathring\nabla_i g_{jk}-\mathring\nabla_k (\mathring g^{ij}g_{ij})\right)\nu^k 
\,dS_{\mathring g},
\] 

\begin{lem}\label{lem-ADMlim}
Let $(\mathring M,\mathring g)$ be an APE manifold with an exhaustion $\bigcup_{r\to\infty}\Sigma_r$ of $\mathring M$ as above. For sufficiently large $r$ consider a family of metrics $g=g(r)$ such that 
$g|_{\p \Sigma_r} = \mathring g|_{\p \Sigma_r}$. Then
\[ 
\m_{ADM,\mathring g}(g,\partial \Sigma_r)=2\int_{\partial \Sigma_r}(H_{\mathring g}-H_g)dS_g.
\]

In particular, along the foliation given by the exhaustion, we have
\[ 
\lim_{r\to\infty}
\left(
2\int_{\partial \Sigma_r}(H_{\mathring g}-H_g)dS_g-\m_{ADM,\mathring g}(g,\partial \Sigma_r)
\right)
=0.
\]
\end{lem}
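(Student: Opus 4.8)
The plan is to establish the pointwise boundary identity
\[
\m_{ADM,\mathring g}(g,\partial\Sigma_r)=2\int_{\partial\Sigma_r}(H_{\mathring g}-H_g)\,dS_g
\]
directly, under the sole hypothesis $g|_{\partial\Sigma_r}=\mathring g|_{\partial\Sigma_r}$, and then observe that the limiting statement is an immediate consequence. The key point is that when the two metrics agree on the hypersurface $\partial\Sigma_r$ (as induced metrics), the linearized-type ADM integrand $\big(\mathring g^{ij}\mathring\nabla_i g_{jk}-\mathring\nabla_k(\mathring g^{ij}g_{ij})\big)\nu^k$ can be rewritten in terms of the difference of mean curvatures computed with respect to the two ambient metrics. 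This is the standard algebraic relation underlying the Brown--York and ADM masses; I would carry it out as follows.

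First, set $h=g-\mathring g$, so that $h$ restricted to $T\partial\Sigma_r$ vanishes, i.e. the only nonzero components of $h$ at $\partial\Sigma_r$ involve the normal direction $\nu$. Next, recall that the mean curvature of $\partial\Sigma_r$ with respect to a metric $g$ is $H_g=g^{ij}(\mathring\nabla_i\nu_j)$ up to correction terms, or more invariantly $H_g=\divergence_g(\nu/|\nu|_g)$ computed with the Levi-Civita connection of $g$; the variation of $H$ under a change of metric is a classical formula. I would expand $H_g$ around $H_{\mathring g}$ to first order in $h$ using the standard formula for the variation of the second fundamental form / mean curvature, keeping in mind that $g|_{\partial\Sigma_r}=\mathring g|_{\partial\Sigma_r}$ kills the tangential derivatives of the tangential part of $h$. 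The terms that survive are precisely those built from the normal derivative of $h$ and from $\mathring\nabla_k h$ contracted with $\nu$, and matching these against the ADM integrand $\big(\mathring g^{ij}\mathring\nabla_i g_{jk}-\mathring\nabla_k\mathrm{tr}_{\mathring g}g\big)\nu^k=\big(\mathring\nabla_i h_{jk}\mathring g^{ij}-\mathring\nabla_k\mathrm{tr}_{\mathring g}h\big)\nu^k$ gives the factor $2$ and the identity. The cleanest bookkeeping is probably to introduce Gaussian (normal) coordinates for $\mathring g$ near $\partial\Sigma_r$, so that $\mathring g=d\rho^2+\mathring\sigma_{ab}(x)dx^a dx^b$, write $g=d\rho^2\cdot(\text{something})+\ldots$ with $g_{\rho a}$ and $g_{ab}-\mathring\sigma_{ab}$ vanishing at $\rho=0$, and compute both sides explicitly; the Christoffel symbols and the expression for $H$ in such coordinates are elementary.

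Once the pointwise identity is proven, the "in particular" statement is trivial: the quantity $2\int_{\partial\Sigma_r}(H_{\mathring g}-H_g)\,dS_g-\m_{ADM,\mathring g}(g,\partial\Sigma_r)$ is identically zero for every sufficiently large $r$, so its limit as $r\to\infty$ is zero. I do not expect any genuine obstacle here beyond careful index gymnastics; the main subtlety is purely notational — making sure the boundary-agreement hypothesis is used correctly so that no spurious tangential terms appear, and that the sign and normalization conventions for $H$ (outward normal, and the sign convention for $K$ flagged earlier in the paper) are tracked consistently so that the coefficient comes out as exactly $2$ rather than $\pm 2$ or $2(n-1)$. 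A sanity check against the flat/Schwarzschild-AdS model, or against the analogous computation in \cite{DKM2023arxiv}, would confirm the constant.
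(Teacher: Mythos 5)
Your proposal follows essentially the same route as the paper: work in adapted Gaussian coordinates $\mathring g = dr^2+\mathring q$, $g = dr^2+q$ near $\partial\Sigma_r$, use the boundary agreement $q=\mathring q$ on $\partial\Sigma_r$ to kill the tangential terms, and match the ADM integrand against $q^{AB}\partial_r(\mathring q_{AB}-q_{AB}) = 2(H_{\mathring g}-H_g)$, exactly as in the Hawking--Horowitz computation the paper cites. One caution: the claimed identity is exact at each $r$, not merely first order in $h=g-\mathring g$, so you should carry out the explicit coordinate computation you describe rather than rely on the linearized variation formula for $H$ --- the point being that the boundary condition makes the would-be quadratic remainder vanish identically, which the coordinate calculation exhibits but a first-order expansion alone does not establish.
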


\begin{proof}
This is a standard computation for asymptotically flat manifolds, and the same arguments apply here. We follow the arguments of Hawking and Horowitz \cite{hawking1996gravitational}.

We first fix $r$ large and work in coordinates adapted to the foliation provided by the given exhaustion, namely we write
\[
g=dr^2+q,\qquad \text{ and}\qquad \mathring g=dr^2+\mathring q,
\] 
where $q$ and $\mathring q$ are metrics on $\partial \Sigma_r$. We can then directly compute
\[\begin{split}
2(H_{\mathring g}-H_g)
&=
\mathring q^{AB}\p_r \mathring q_{AB}-q^{AB}\p_r q_{AB}\\
&=
q^{AB}\p_r\left(\mathring q_{AB}-q_{AB} \right)+\left( \mathring q^{AB}-q^{AB} \right)\p_r \mathring q_{AB}\\
&=
q^{AB}\p_r\left( \mathring q_{AB}-q_{AB} \right),
\end{split}\]
where we used the condition $\mathring q=q$ on $\partial\Sigma_r$ in the last step. This can now be shown to be equal to $\m_{ADM,\hg}(g,\partial \Sigma_r)$ as follows. We compute
\[ \begin{split}
\m_{ADM,\mathring g}(g,\partial \Sigma_r)
&=
\int_{\partial \Sigma_r} 
\left(\mathring g^{ij}\mathring\nabla_i (g_{jk}-\mathring g_{jk})-\mathring g^{ij}\mathring\nabla_k (g_{ij}-\mathring g_{ij})
\right)\nu^k \,dS\\
&=
\int_{\partial \Sigma_r} \Bigg( 
\mathring g^{ij}\mathring\nabla_i ((g_{jk}-\mathring g_{jk})\nu^k)-\mathring g^{ij}(g_{jk}-\mathring g_{jk})\mathring\nabla_i \nu^k\\
&\qquad\qquad
-\mathring g^{ij}\mathring\partial_r (g_{ij}-\mathring g_{ij})+2\mathring\Gamma^p_{kj}(g_{ip}-\mathring g_{ip})\mathring g^{ij}\nu^k \Bigg) \,dS,
\end{split} \]
and then note that the second and fourth terms vanish since $g=\mathring g$ on $\partial\Sigma_r$, while the first term vanishes since $(g_{jk}-\mathring g_{jk})\nu^k=0$. Since $g-\mathring g$ is only nonzero in directions tangential to $\p\Sigma_r$ we find that
\[ \begin{split}
\m_{ADM,\mathring g}(g,\partial \Sigma_r)
&=
\int_{\partial \Sigma_r}-\mathring q^{AB}\mathring\partial_r (q_{AB}-\mathring q_{AB})\,dS\\
&=
2\int_{\p\Sigma_r}(H_{\mathring g}-H_g)\,dS.
\end{split} \]
\end{proof}

Now for each $\Sigma_r$ we have a reduced Hamiltonian from \eqref{eq-RedHamComp} for a region of spacetime $\Sigma_{r}\times I$. Taking $r\to\infty$, and making use of Lemma~\ref{lem-ADMlim} and the decay conditions for $X$ and $\pi$, we arrive at the main conclusion on this section. Namely, the reduced Hamiltonian on $M$ is given by
\[
\mathcal H_{red}(\gamma,p) = t^{n-2}\m_{VR,\mathring g}(g),
\]
 where $\m_{VR,\mathring g}(g)$ is the volume-renormalized mass of $g$ with respect to $\mathring g$. By Theorem \ref{thm-well-defined}, Remark \ref{rem-compatibility} and the fact we have a CMC foliation, it is clear that $\mathcal H_{red}$ is well-defined on the reduced phase space $\mathcal P_{red}$

\begin{prop}
The lapse $N$ is uniquely determined by the CMC slicing condition.
\end{prop}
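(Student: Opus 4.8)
The plan is to derive an elliptic equation for $N$ from the requirement that the mean curvature $\tau$ be constant along the foliation, and then appeal to maximum-principle arguments together with the boundary condition $N=1$ on $\partial\Sigma$ to conclude uniqueness. Concretely, I would start from the evolution equation \eqref{eq-evo} for $\hg$ together with the second Einstein evolution equation for $\widehat K$ (the one that expresses $\partial_t \widehat K$ in terms of $N$, its Hessian, the Ricci curvature, and $\widehat K$ itself). Tracing the evolution of $\widehat K$ and using that $\tau = \trace_{\hg}\widehat K$ is prescribed as $-n/t$, the condition $\partial_t \tau = n/t^2$ (which is dictated purely by the CMC slicing) produces the standard \emph{lapse equation}
\[
-\Delta_{\hg} N + \left(|\widehat K|_{\hg}^2 + \ric_{\hg}(\widehat\nu,\widehat\nu)\right) N = -\partial_t \tau + \mathcal{L}_{t^{-1}\widehat X}\tau = \text{(known function)},
\]
where the right-hand side is fixed once the slicing and shift are fixed, and on using the Hamiltonian constraint $\Phi_0(\hg,\widehat\pi)=0$ the zeroth-order coefficient can be rewritten so its sign is controlled. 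The key point is that this is a linear elliptic equation for $N$ with a zeroth-order coefficient of a definite (nonnegative) sign, given the negative-Yamabe/CMC setup, so together with the Dirichlet condition $N=1$ on $\partial\Sigma$ it has a unique solution.

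The steps, in order, are: (1) write down the two Einstein evolution equations in the rescaled ADM form \eqref{eq:ADM_decomp}, identifying the $\partial_t \widehat K$ equation; (2) take the trace and impose $\tau \equiv -n/t$, which is the CMC slicing condition, to obtain the lapse equation above; (3) use the Hamiltonian constraint to rewrite $|\widehat K|_{\hg}^2 + \ric_{\hg}(\widehat\nu,\widehat\nu)$ in a form whose sign is manifestly controlled (this is where $\mathcal{Y}_{\hg}<0$, equivalently $\scal_{\hg}<0$ in the conformal gauge, enters to guarantee the zeroth-order coefficient is positive, or at least nonnegative, so that the operator is injective); (4) invoke standard linear elliptic theory on the compact manifold $\Sigma$ with boundary (existence and uniqueness for the Dirichlet problem when the operator has no kernel) to conclude that $N$ is uniquely determined by the data $(\hg,\widehat K)$ on the slice and the boundary value $N=1$. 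In the noncompact asymptotically Milne-like case one additionally notes that the decay condition $N-1\in C^{k,\alpha}_\delta$ in Definition~\ref{defn-asympmilne} plays the role of the boundary condition at infinity, and the same maximum-principle argument gives uniqueness there.

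The main obstacle I expect is step (3): controlling the sign of the zeroth-order coefficient $|\widehat K|_{\hg}^2 + \ric_{\hg}(\widehat\nu,\widehat\nu)$. A priori $\ric_{\hg}(\widehat\nu,\widehat\nu)$ need not have a good sign, so one must use the Hamiltonian constraint together with the CMC condition $\trace_g\overline\pi = -n(n-1)$ (equivalently $\tau = -n/t$) to trade it for scalar-curvature and $|\widehat K|^2$ terms, and then use that in the conformal gauge the relevant metric has $\scal = -n(n-1)<0$; the traceless part of $\widehat K$ contributes a nonnegative term, and the trace part contributes a strictly positive constant, so the coefficient is strictly positive. Once that sign is pinned down, everything else is routine elliptic theory. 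A secondary subtlety is being careful that the shift $\widehat X$ and its $\mathcal{L}_{t^{-1}\widehat X}\tau$ contribution really are inert for the uniqueness question (they only affect the inhomogeneous term, not the operator), and that the boundary condition $N=1$ on $\partial\Sigma$ is the correct one inherited from the boundary conditions imposed on $(g,N,X)$ earlier in Section~\ref{Sec-CompactHam}.
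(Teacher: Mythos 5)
Your proposal is correct and follows essentially the same route as the paper: trace the evolution equation for $\widehat K$, impose $\tau=-n/t$, use the Hamiltonian constraint to reduce the zeroth-order coefficient to $|K|_{\widehat g}^2$, and conclude uniqueness from the resulting elliptic equation $\widehat\Delta N - N|K|_{\widehat g}^2 = -\tau^2/n$ with $N=1$ on the boundary (or $N\to 1$ at infinity). The only simplification you miss is that the sign issue in your step (3) resolves itself without invoking $\mathcal{Y}_{\hg}<0$: after using the vacuum constraint the coefficient is exactly $|K|_{\widehat g}^2 \geq \tau^2/n > 0$ since $\tau\neq 0$.
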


\begin{proof}
This follows from the Einstein evolution equations for vacuum initial data. In terms of the induced metric $\widehat g$ and second fundamental form $K$ the well-known (see, for example \cite{Gourgoulhon}) vacuum evolution equation are
\begin{equation}
\begin{split}\label{eq-evoeqs}
\frac{\partial}{\partial t} \widehat g_{ij} 
&= -2N K_{ij} + \mathcal{L}_X \widehat g_{ij} , \\
\frac{\partial}{\partial t} K_{ij} 
&= -\widehat \nabla_i \widehat \nabla_j N 
+ N \left( \widehat R_{ij} - 2 K_{ik} K^k{}_j + \trace_{\widehat g}K K_{ij} \right) 
+ \mathcal{L}_X K_{ij},
\end{split}
\end{equation} 
where we raise and lower indices with $\widehat g$. If we impose the CMC slicing condition $\trace_{\widehat g}(K)=\tau=-n/t$, we have
\[
\frac{\partial}{\partial t}(\trace_{\widehat g}(K))
= \frac{n}{t^2}
= -K^{ij}\frac{\partial}{\partial t}(\widehat g_{ij})+\widehat g^{ij}\frac{\partial }{\partial t}(K_{ij}).
\]
Making use of \eqref{eq-evoeqs} and the fact that $\mathcal L_X(\widehat g^{ij}K_{ij})=0$ we arrive at the elliptic equation
\begin{equation}\label{eq-elliptic-N}
\widehat \Delta N -N|K|_{\widehat g}^2 = -\frac{\tau^2}{n},
\end{equation}
which has a unique solution with $N \to 1$ at infinity.
\end{proof} 

\begin{rem}
To fully reduce the phase space so that each point in the phase space corresponds precisely to a unique solution to the Einstein equations, we must also fix the shift $X$ and quotient out by diffeomorphisms of $M$ that are isotopic to the identity. Fixing $X$ will play a role in analysing the second variation of the reduced Hamiltonian in the following section, but we do not need to quotient out by diffeomorphisms of $M$ since the quantities we are interested in are diffeomorphism invariant.
\end{rem}

\section{Variation of the reduced Hamiltonian}\label{Sec-variations}

\subsection{First variation}

Since the reduced Hamiltonian is proportional to the volume-renormalized mass of $g$ with respect to $\mathring g$, it is known that critical points with respect to variations in $g$ that preserve scalar curvature are exactly Einstein metrics \cite{DKM2023arxiv}. We will now show that critical points with respect to the reduced variables on the entire phase space are also Einstein metrics with vanishing reduced momentum.

\begin{thm}\label{thm-firstvar}
	Critical points of $\mathcal H_{red}$ on $\mathcal P_{red}$ are precisely $(\gamma,p)$ where $\gamma$ satisfies $\ric_\gamma=-(n-1)\gamma$ and $p=0$.
\end{thm}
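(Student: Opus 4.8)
The plan is to compute the first variation of $\mathcal H_{red}$ on the reduced phase space $\mathcal P_{red}$ directly from the expression
\[
\mathcal H_{red}(\gamma,p) = t^{n-2}\m_{VR,\mathring g}(g,\pi),
\]
and identify the critical point equations. A natural first step is to observe that $\mathcal H_{red}$ is, up to the fixed factor $t^{n-2}$, the value of the volume-renormalized mass of the physical data $(g,\pi)$ obtained from $(\gamma,p)$ via the conformal method \eqref{eq-redvars}–\eqref{eq-lichn}. I would exploit the Hamiltonian structure already set up in Section~\ref{Sec-CompactHam}: since $\mathcal H_{red}$ generates the reduced Einstein flow, a point $(\gamma,p)$ is critical exactly when the corresponding Hamilton equations have vanishing right-hand side, i.e. when $(\gamma,p)$ is a fixed point of the reduced flow. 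But such a fixed point corresponds to a \emph{self-similar} (Milne-like) spacetime, whose spatial slices are Einstein with $\ric_\gamma = -(n-1)\gamma$, and for which the momentum $p$ (the transverse-traceless part rescaled) must vanish. So the strategy splits into: (i) show a critical point forces $\partial_t\gamma = 0$ and $\partial_t p = 0$ along the generated flow; (ii) translate these into $\ric_\gamma=-(n-1)\gamma$ and $p=0$.

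Concretely, I would instead compute $D\mathcal H_{red}$ variationally. Write $\m_{VR,\mathring g}(g,\pi)$ using the initial-data formula from Section~\ref{Sec-IDVR}, namely the limit of $\m_{{\rm ADM},\mathring g}(g,R) + 2(n-1)RV_{\mathring g}(g,R) + 2\int_{B_R}(\trace_g\overline\pi + n(n-1))\,dV_{\mathring g}$. Variations of $(\gamma,p)$ in $T\mathcal P_{red}$ induce variations of $(g,\pi)$ through the linearized conformal map, where the conformal factor $\varphi$ also varies by the linearized Lichnerowicz equation \eqref{eq-lichn}. The key simplification is that the volume-renormalized mass, restricted to the constraint manifold, is insensitive to the gauge/conformal directions: by \cite[Thm~3.14]{DKM2023arxiv} or the analysis in \cite{DKM2023arxiv}, its first variation over constant-scalar-curvature metrics is governed by $\ric_g + (n-1)g$, paired against the metric variation. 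One then checks that the $p$-dependence enters through $\int (\trace_g\overline\pi + n(n-1))$, whose variation with respect to $p$ (which is transverse-traceless, so $\trace_\gamma p = 0$) produces a term forcing $p=0$ at a critical point, since the relevant pairing is $\langle p, \text{(something nondegenerate)}\rangle$. I would organize this as: first vary in $p$ holding $\gamma$ fixed to get $p=0$; then with $p=0$ the Lichnerowicz equation forces $\varphi\equiv 1$ and $(g,\pi)=(\gamma,-(n-1)\gamma^{-1}dV_\gamma)$, reducing to the purely Riemannian first variation of $\m_{VR,\mathring g}$, which by \cite{DKM2023arxiv} vanishes in all directions tangent to $\{\scal=-n(n-1)\}$ iff $\ric_\gamma=-(n-1)\gamma$.

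The main obstacle I anticipate is handling the conformal factor variation and the boundary/limit terms carefully. When $(\gamma,p)$ varies, $\varphi$ varies too, and the metric $g=\varphi^{4/(n-2)}\gamma$ is \emph{not} of constant scalar curvature $-n(n-1)$ after a generic variation unless one stays on the constraint set — so one must either (a) show the $\varphi$-variation contributes nothing to $D\mathcal H_{red}$ because it is a pure gauge/conformal direction annihilated by the mass functional (this is the cleanest route, mirroring how Fischer–Moncrief show the reduced Hamiltonian's variation ignores the conformal factor), or (b) carry the linearized Lichnerowicz equation through the computation explicitly. I would go with (a): argue that $D_\varphi(\text{mass})=0$ because varying $\varphi$ alone, keeping $(\gamma,p)$ fixed, moves $(g,\pi)$ within a fiber that the mass is constant on (essentially the content that $\m_{VR}$ is well-defined on $\mathcal P_{CMC}$ modulo the conformal gauge). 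The remaining care is ensuring all the $R\to\infty$ limits and potential boundary terms at infinity vanish under the decay assumptions on $T\mathcal P_{red}$ elements ($h = \gamma' \in C^{k,\alpha}_\delta$, $p' \in C^{k-1,\alpha}_\delta$ with $\delta > \tfrac{n-1}{2}$), which follows from the same integrability estimates used in Theorem~\ref{thm-well-defined}. Once those technical points are dispatched, identifying the Euler–Lagrange equations as $\ric_\gamma = -(n-1)\gamma$ and $p=0$ is routine.
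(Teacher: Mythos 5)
Your overall skeleton (first vary in $p$ to force $p=0$, then with $p=0$ conclude $\varphi\equiv 1$ and reduce to the Riemannian first variation of $\m_{VR,\mathring g}$ over constant scalar curvature metrics, citing \cite{DKM2023arxiv}) matches the paper, and the final Riemannian step is handled the same way. But the mechanism you propose for the $p$-variation step contains a genuine gap, and in fact two of your claims contradict each other. First, on $\mathcal P_{CMC}$ one has $\trace_g\overline\pi=-n(n-1)$ identically, so the term $\int_{B_R}(\trace_g\overline\pi+n(n-1))\,dV_{\mathring g}$ in the initial-data mass vanishes pointwise; it cannot be the source of the pairing $\langle p,\cdot\rangle$ that forces $p=0$. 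Second, your preferred route (a) — that the conformal-factor direction is ``annihilated by the mass functional'' so $D_\varphi(\text{mass})=0$ — is false, and if it were true the theorem could not hold: the only way $p$ enters $\mathcal H_{red}=t^{n-2}\m_{VR,\mathring g}(\varphi^{4/(n-2)}\gamma)$ is through $\varphi$ via the Lichnerowicz equation, so if the mass were insensitive to $\varphi$ then $\mathcal H_{red}$ would be independent of $p$ altogether and every $p$ would be critical in the $p$-direction.

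The key identity you are missing is the explicit conformal-change formula (obtained by integrating by parts as in the conformal positive mass theorem of \cite{DKM2023arxiv}):
\begin{equation*}
\m_{VR,\mathring g}(\varphi^{\frac{4}{n-2}}\gamma)
=\m_{VR,\mathring g}(\gamma)
+2(n-1)\int_M\Big(\varphi^{\frac{2n}{n-2}}-1-\tfrac{2}{n-2}\Delta_{\mathring g}\varphi\Big)\,dV_{\mathring g},
\end{equation*}
whose second term is generically nonzero and carries all of the $p$-dependence. The substantive work in the paper is then the analysis of $\varphi_p=D_p\varphi(r)$ through the linearized Lichnerowicz equation: for $r=p$ a maximum-principle argument (using $\varphi\geq 1$) gives $\varphi_p\geq 0$, while a mapping/decay argument for the linearized operator shows $\int_M\Delta\varphi_p\,dV=0$, and combining these with the formula for $D_p\mathcal H$ forces $\varphi_p\equiv 0$ and hence $p\equiv 0$. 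None of this is ``routine'' bookkeeping of limits; it is the core of the proof, and your proposal as written skips it. (Your opening suggestion to identify critical points with fixed points of the reduced flow is also not a shortcut: it presupposes that the reduced symplectic form is nondegenerate and that the generated flow is the Einstein flow, which is essentially what the paper establishes elsewhere rather than something one may assume here.)
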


\begin{proof}
We first calculate the reduced Hamiltonian in terms of $\gamma$ and the conformal factor $\varphi$ that solves the Lichnerowicz equation, noting that
\begin{equation} \label{eq-confVRmass}
\mathcal H_{red}(\gamma,p)=t^{n-2}\m_{VR,\mathring g}(\varphi^{\frac{4}{n-2}}\gamma).
\end{equation}
We follow the computations for the proof of the conformal positive mass theorem for the volume-renormalized mass, Theorem~4.5 of \cite{DKM2023arxiv}.
We have
\[ \begin{split}
\m_{VR,\mathring g}(\varphi^{\frac{4}{n-2}}\gamma)
&=
\lim_{r\to\infty}\Big( \int_{\partial \Sigma_r}  \varphi^{\frac{4}{n-1}}\left(\mathring g^{ij}\mathring \nabla_i(\gamma_{jk})-\mathring\nabla_k(\mathring g^{ij}\gamma_{ij}) \dv \right)dS_{\mathring g}\\
&\qquad
+\int_{\Sigma_r} \frac{4}{n-2}\varphi^{\frac{6-n}{n-2}} \left(\mathring g^{ij}\gamma_{jk}\partial_i(\varphi)-\mathring g^{ij}\gamma_{ij}\partial_k(\varphi)\right)\,dS_{\mathring g}\\
&\qquad
+2(n-1)\int_{\Sigma_r}(\varphi^{\frac{2n}{n-2}}dV_{\gamma}-dV_{\mathring g})\Big) \\
&=
\lim_{r\to\infty}\Big( \int_{\Sigma_r} \left(\mathring g^{ij}\mathring \nabla_i(\gamma_{jk})-\mathring\nabla_k(\mathring g^{ij}\gamma_{ij}) \dv \right)dS_{\mathring g} \\
&\qquad
+2(n-1)\left(\int_{\Sigma_r}\varphi^{\frac{2n}{n-2}}\left( dV_\gamma -dV_{\mathring g}\right)+ \int_{\Sigma_r}(\varphi^{\frac{2n}{n-2}}-1)dV_{\mathring g}  \right)\\
&\qquad
-\frac{4(n-1)}{(n-2)}\int_{\partial \Sigma_r}\varphi^{\frac{6-n}{n-2}}\partial_i(\varphi)\nu^i\,dS_{\mathring g}\Big) \\
&=
\m_{VR,\mathring g}(\gamma)+2(n-1)\int_M\Big(\varphi^{\frac{2n}{n-2}}-1-\frac{2}{n-2}\Delta_{\mathring g}(\varphi)\Big)dV_{\mathring g} .
\end{split} \]

We first compute the variation with respect to $p$, which gives
\begin{equation} \label{eq-varp}
D_p\mathcal H(\gamma,p)(r)
= t^{n-2}2(n-1) \int_M \Big(
\frac{2n}{n-2}\varphi^{\frac{n+2}{n-2}}\varphi_p - \frac{2}{n-2}\Delta_{\mathring g} \varphi_p 
\Big) dV_{\mathring g},
\end{equation}
where $\varphi_p=D_p(\varphi)(r)$ is used to denote the variation of $\varphi$ in the direction of $r$.

Following Fischer and Moncrief in the compact case \cite[Sec~4.2]{FischerMoncrief2002}, we can calculate $\varphi_p$ from the Lichnerowicz equation, which when expressed in terms of $p$ is given by
\[
-\frac{4(n-1)}{n-2}\Delta_\gamma \varphi 
- n(n-1)\varphi
+ n(n-1)\varphi^{\frac{n+2}{n-2}} 
- |p|_\gamma^2\varphi^{-\frac{3n-2}{n-2}}dV_\gamma^{-2}
= 0,
\]
where $dV_\gamma^{-2}$ denotes a scalar density of weight $-2$, which ``de-densitizes" $|p|^2_\gamma$; that is, $|p|^2dV_\gamma^{-2}=|\overline p|_\gamma^2$. Taking the variation with respect to $p$ gives
\begin{equation} \label{eq-VarLich}
\begin{split}
&-\frac{4(n-1)}{n-2}\Delta_\gamma \varphi_p 
- n(n-1)\varphi_p
+ \frac{n(n-1)(n+2)}{n-2}\varphi^{\frac{4}{n-2}}\varphi_p \\ 
& \qquad
- 2(p\cdot_\gamma r) \varphi^{\frac{2-3n}{n-2}}dV_\gamma^{-2}
+ \frac{3n-2}{n-2}|p|_\gamma^2\varphi^{\frac{4-4n}{n-2}}\varphi_pdV_\gamma^{-2}= 0,\end{split}
\end{equation}
We first set $p=0$ and show this implies $\varphi_p=0$ by multiplying \eqref{eq-VarLich} by $\varphi_p$ and integrating over $M$, yielding
\begin{equation} \label{eq-premaxprinc}
\int_M  4|\nabla_\gamma\varphi_p|_\gamma^2 \dv_\gamma
= \int_M\left(\frac{n}{n-2}- n(n+2)\varphi^{\frac{4}{n-2}}\right)\varphi_p^2 dV_\gamma,
\end{equation}
where a surface integral at infinity from the divergence theorem vanishes due to the decay of $\varphi_p$. By the maximum principle applied to the Lichnerowicz equation we obtain $\varphi^{\frac{4}{n-2}}\geq 1$, which implies that
\[
\left(\frac{n}{n-2}- n(n+2)\varphi^{\frac{4}{n-2}}\right)<0.
\]
In particular, \eqref{eq-premaxprinc} can only hold if $\varphi_p$ is identically zero, which in turn implies $D_p\mathcal H=0$ by \eqref{eq-varp}.

We next show the converse. That is, if $D_p\mathcal H=0$ at some $(\gamma,p)$ then $p$ must vanish. To this end, we take $r=p$ and will show $\varphi_p=0$.

The maximum principle applied to \eqref{eq-VarLich} shows that at any minimum value of $\varphi_p$ we must have
\[ \begin{split}
&\left( \frac{n+2}{n-2}\varphi^{\frac{4}{n-2}}-1+\frac{3n-2}{n(n-1)(n-2)}\left|p\right|^2_\gamma dV_\gamma^{-2} \varphi^{\frac{4-4n}{n-2}} \right)\varphi_p \\
&\qquad\qquad
\geq \frac2{n(n-1)}\left|p\right|^2_\gamma\dv_\gamma^{-2}\varphi^{\frac{2-3n}{n-2}}.
\end{split} \]
Since $\varphi^{\frac{4}{n-2}}\geq1$, the term in parentheses must be positive and therefore we must have $\varphi_p\geq0$ at any minimum value. That is, $\varphi_p\geq0$ everywhere. It then follows from equation \eqref{eq-varp} and the fact that $p$ is critical that we have
\[
\int_M \Delta_{\mathring g} \varphi_p \dv_{\mathring g}>0,
\] 
unless $\varphi_p$ is identically zero. However we now show $\int_M \Delta_{\mathring g} \varphi_p \dv_{\mathring g}=0$. For this purpose, consider the operator
\begin{align*}
	P_{\gamma}&=	-\frac{4(n-1)}{n-2}\Delta_\gamma  
	- n(n-1)
	+ \frac{n(n-1)(n+2)}{n-2}\varphi^{\frac{4}{n-2}}\\ &\qquad+\frac{3n-2}{n-2}|p|_\gamma^2\varphi^{\frac{4-4n}{n-2}}(dV_\gamma)^{-2}
\end{align*}
so that we can rewrite equation \eqref{eq-VarLich} as
\begin{align*}
	P_{\gamma}\varphi_p=2|p|_{\gamma}^2\varphi^{\frac{2-3n}{n-2}}(dV_\gamma)^{-2}.
\end{align*}
Since $\varphi\geq 1$ and $|p|_\gamma^2\geq 0$, we have
\begin{align*}
	P_{\gamma}\geq -\frac{4(n-1)}{n-2}(\Delta_\gamma-n)
\end{align*}
in the $L^2$ sense, so that the $L^2$-kernel of $P_{\gamma}$ is trivial. Furthermore, since $\varphi\to1$ at infinity, $P_{\gamma}$ has the same indicial roots as $\Delta_{\gamma}-n$ and is therefore an isomorphism as an operator \cite[Thm~C, Prop~E]{Lee06}
\begin{align*}
	P_{\gamma}:C^{k,\alpha}_{2\delta}\to C^{k-2,\alpha}_{2\delta}
\end{align*}
for $2\delta\in (-1,n)$. In particular, since $|p|^2_{\gamma}\in C^{0,\alpha}_{2\delta}$ for some $2\delta>n-1$, we get $\varphi_p\in C^{2,\alpha}_{2\delta}$. Applying the divergence theorem on a large ball $B_R$ and letting $R\to\infty$ shows that
\begin{align*}
	\int_M\Delta_{\gamma}\varphi_p\dv_{\gamma}=0.
\end{align*}
We therefore conclude that $\varphi_p \equiv 0$. Substituting this back into \eqref{eq-VarLich} and recalling that we set $r=p$, one immediately concludes that $p\equiv0$ at a critical point.

Note that if $p\equiv0$ then the unique solution to the Lichnerowicz equation is the constant solution $\varphi\equiv1$. In this case, the reduced Hamiltonian is simply
\[ 
\mathcal H_{red}(\gamma,p)=t^{n-2}\m_{VR,\mathring g}(\gamma).
\] 
It therefore follows that critical points of $\mathcal H_{red}$ are precisely critical points of $\m_{VR,\mathring g}$ subjected to the restriction $\scal=-n(n-1)$, which is known to be exactly the metrics $\gamma$ with $\ric_\gamma=-(n-1)\gamma$, see \cite{DKM2023arxiv, McCormick2025}.
\end{proof}

\subsection{Second variation}

We next calculate the Hessian of the reduced Hamiltonian at a critical point. As with the preceding section, this follows the compact case studied in \cite[Sec~4.3]{FischerMoncrief2002}. Before stating the main result of this section, we recall the Einstein operator which acts on symmetric $2$-tensors $h$ by
\begin{equation*}
\Delta_E h_{ij} 
=
- \widetilde{\nabla}_k\widetilde \nabla^k h_{ij} 
+ 2R_{\widetilde\gamma;\,ikjl}h^{kl}.
\end{equation*}

\begin{thm}
	Let $(\widetilde \gamma, 0)$ be a critical point of $\mathcal H_{red}$ on $\mathcal P_{red}$. The second variation of the volume-renormalized mass at $(\widetilde\gamma,0)$ is given by
\begin{equation*} 
		\begin{split}
			&D^2(\m_{VR,\mathring g}(\varphi^{\frac{4}{n-2}}\gamma))((h,r),(h,r)) \\
			&\quad=
			\int_M \left( \frac12 h^{ij}\Delta_E h_{ij} + 2|r|^2_{\widetilde\gamma}dV_{\widetilde\gamma}^{-2} 
			\right) dV_{\widetilde\gamma}.
		\end{split}
	\end{equation*}
\end{thm}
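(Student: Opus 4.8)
The plan is to compute the Hessian of $\m_{VR,\mathring g}(\varphi^{\frac{4}{n-2}}\gamma)$ at the critical point $(\widetilde\gamma,0)$ by exploiting the conformal expression established in the proof of Theorem~\ref{thm-firstvar}, namely $\m_{VR,\mathring g}(\varphi^{\frac{4}{n-2}}\gamma)=\m_{VR,\mathring g}(\gamma)+2(n-1)\int_M(\varphi^{\frac{2n}{n-2}}-1-\tfrac{2}{n-2}\Delta\varphi)\,dV$, where $\varphi=\varphi(\gamma,p)$ solves the Lichnerowicz equation \eqref{eq-lichn}. Evaluating along a curve $(\gamma(s),p(s))$ in $\mathcal P_{red}$ with $\dot\gamma=h$ and $\dot p=r$, the second derivatives $\ddot\gamma,\ddot p$ drop out of the Hessian because $D\mathcal H_{red}$ vanishes at the critical point. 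Thus $D^2$ splits into a \emph{metric part} coming from $\m_{VR,\mathring g}(\gamma)$, which sees only $h$, and a \emph{conformal part} coming from the correction integral, which — as the computation will show — sees only $r$; this produces the stated sum with no cross term.

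The decisive simplification is that the first variation of $\varphi$ vanishes at the critical point. I would linearize \eqref{eq-lichn} at $\varphi=1$, $p=0$, $\scal_{\widetilde\gamma}=-n(n-1)$: the $\gamma$-dependence enters only through $\Delta_\gamma$ applied to the constant $1$ (hence contributes nothing) and through $|p|^2_\gamma$ (quadratic in $p$, hence contributes nothing to first order), while the linearization in $p$ vanishes for the same quadratic reason. Hence $\dot\varphi$ lies in the kernel of $-\Delta_{\widetilde\gamma}+n$ with decay, so $\dot\varphi\equiv0$ — exactly as in the proof of Theorem~\ref{thm-firstvar}, using that this operator is an isomorphism between the relevant weighted Hölder spaces \cite{Lee06}. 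Consequently every term in the second-order Taylor expansion that involves $\dot\varphi$ disappears, which in particular removes all $h$-dependence from the conformal part.

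Differentiating \eqref{eq-lichn} twice and using $\dot\varphi\equiv0$ then yields $(-\Delta_{\widetilde\gamma}+n)\ddot\varphi=\frac{n-2}{2(n-1)}|r|^2_{\widetilde\gamma}dV_{\widetilde\gamma}^{-2}$, whose right-hand side lies in $C^{k-2,\alpha}_{2\delta}$ for some $2\delta>n-1$, so $\ddot\varphi\in C^{k,\alpha}_{2\delta}$ by the same isomorphism argument. Since $\dot\varphi\equiv0$, the second variation of the correction integral reduces to $\frac{4n(n-1)}{n-2}\int_M\ddot\varphi\,dV$, the Laplacian term integrating to zero by the divergence theorem and the decay of $\ddot\varphi$. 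Integrating the $\ddot\varphi$-equation over $M$ (again the Laplacian drops) gives $\int_M\ddot\varphi\,dV=\frac{n-2}{2n(n-1)}\int_M|r|^2_{\widetilde\gamma}dV_{\widetilde\gamma}^{-2}\,dV_{\widetilde\gamma}$, so the conformal part contributes precisely $2\int_M|r|^2_{\widetilde\gamma}dV_{\widetilde\gamma}^{-2}\,dV_{\widetilde\gamma}$.

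It remains to identify the metric part $D^2\m_{VR,\mathring g}(\widetilde\gamma)(h,h)$ for $h$ tangent to $\mathcal P_{red}$, i.e.\ with $D\scal_{\widetilde\gamma}(h)=0$ and in the gauge fixing the shift $X$. Here I would follow the classical second-variation computation of the (renormalized) total scalar curvature at an Einstein metric: decompose $h$ into its transverse-traceless part, a conformal part, and a Lie-derivative part; the Lie part contributes nothing by diffeomorphism invariance (equivalently, is removed by the $X$-gauge), the conformal part is controlled by the constraint $D\scal_{\widetilde\gamma}(h)=0$, and the surviving transverse-traceless contribution is $\frac12\int_M h^{ij}\Delta_E h_{ij}\,dV_{\widetilde\gamma}$, the renormalization affecting only boundary integrals at infinity which vanish under the APE decay assumptions (cf.\ the critical-point analysis in \cite{DKM2023arxiv,McCormick2025}). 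Adding the two parts gives the theorem. I expect the main technical obstacle to be this last step — carrying out the tensor decomposition of $h$ while keeping careful track of the boundary terms at infinity in the renormalized setting and verifying that they vanish — together with the bookkeeping needed to confirm that no contribution from $\ddot\gamma$, $\ddot p$, or from the non-transverse-traceless part of $h$ has been overlooked.
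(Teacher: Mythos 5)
Your proposal follows essentially the same route as the paper's proof: the conformal splitting of $\m_{VR,\mathring g}$, the vanishing of $\dot\varphi$ at the critical point, the second variation of the Lichnerowicz equation producing the $2\int_M|r|^2_{\widetilde\gamma}dV_{\widetilde\gamma}^{-2}\,dV_{\widetilde\gamma}$ term (your coefficient $\tfrac{n-2}{2(n-1)}$ and the resulting integral identity agree with the paper's), and the reduction of $h$ to a transverse-traceless tensor via the divergence gauge together with $D\scal_{\widetilde\gamma}(h)=0$. The one point to watch is your claim that $\ddot\gamma$ ``drops out'': in the paper's explicit computation the terms in $\ell=\gamma''(0)$ do not vanish but combine into $D\scal_{\widetilde\gamma}(\ell)=-D^2\scal_{\widetilde\gamma}(h,h)$ via $\tfrac{d^2}{d\lambda^2}\scal_{\gamma(\lambda)}=0$, and this is precisely where the $\tfrac12\int_M h^{ij}\Delta_E h_{ij}$ term comes from — a step your appeal to the ``classical'' second variation implicitly absorbs rather than eliminates.
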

\begin{proof}
Fix a critical point of the Hessian $(\gamma,p)=(\widetilde\gamma,0)$, where $\widetilde\gamma$ is an Einstein metric satisfying $\ric_{\widetilde \gamma}=-(n-1)\widetilde\gamma$. Let $(h,r)\in T_{(\widetilde\gamma,0)}\mathcal P_{red}$ be a perturbation at the critical point.

Following a similar computation as that which led to \eqref{eq-confVRmass}, we express the volume-renormalized mass as
\begin{equation} \label{eq-VRmass-2ndvar}
\begin{split}
&\m_{VR,\mathring g}(\varphi^{\frac{4}{n-2}}\gamma) \\ 
&\quad = 
\lim_{r\to\infty} \Bigg( \int_{\partial\Sigma_r} 
\left( \mathring g^{ij}\mathring \nabla_i \gamma_{jk}
-\mathring\nabla_k(\mathring g^{ij}\gamma_{ij}) \right) \nu^k dS_{\mathring g}\\
& \quad \quad
-2(n-1)\int_{\Sigma_r} dV_{\mathring g}
+2(n-1)\int_{\Sigma_r} \left( \varphi^{\frac{2n}{n-2}} -\frac{2}{n-2}\Delta_\gamma \varphi \right)  dV_\gamma \Bigg),
\end{split}
\end{equation}
where we have used the asymptotics to interchange metrics in surface integrals at infinity prior to using the divergence theorem. We first examine the second variation of the last term in \eqref{eq-VRmass-2ndvar},
\begin{equation} \label{eq-2ndvar1}
D^2\left( \int_M\left( \varphi^{\frac{2n}{n-2}}-\frac{2}{n-2}\Delta_{\gamma} \varphi \right) dV_{\gamma}\right)\left((h,r),(h,r)\right).
\end{equation}
Let $\delta\varphi=D\varphi(h,r))$, which can be shown to vanish at a critical point as follows. Since $p=0$ at a critical point, the unique solution $\varphi$ to the Lichnerowicz equation \eqref{eq-lichn} is constant $\varphi\equiv 1$, so in linearizing \eqref{eq-lichn} around a critical point gives
\begin{equation*}
	-\frac{4(n-1)}{n-2}\Delta_{\widehat\gamma}(\delta\varphi)+\frac{4n(n-1)}{n-2}\delta\varphi =0,
\end{equation*}
which implies $\delta\varphi=0$. We next let $\delta^2\varphi=D^2\varphi((h,r),(h,r))$ denote the second variation of $\varphi$ and making use of the fact that $\delta\varphi=0$, we calculate the expression \eqref{eq-2ndvar1} to be
\begin{equation} \label{eq-2ndvar2}
\begin{split}
& \int_M \left(\frac{2n}{n-2}\delta^2\varphi-\frac{2}{n-2}D^2\left(\Delta_{\gamma} \varphi \right)
((h,r),(h,r))\right)dV_{\gamma} \\
&\qquad 
+\int_M D^2\left( dV_{\gamma}\right)\left((h,r),(h,r)\right),
\end{split}
\end{equation}
recalling that $\varphi \equiv 1$ at a critical point. Note that the second variation of $\Delta_\gamma \varphi$ can be computed from the second variation of the Lichenerowicz equation, which at a critical point reduces to
\[
-\frac{2}{n-2} D^2 \left(\Delta_{\widetilde \gamma}\varphi \right)((h,r)(h,r))
+ \frac{2n}{(n-2)}\delta^2\varphi
= \frac{|r|^2_{\widetilde\gamma}}{n-1}(dV_{\widetilde\gamma})^{-2}.
\] 
Comparing this to \eqref{eq-2ndvar2} we see that we now only must compute the second variation of the volume form. We continue precisely as in \cite[Sec~4.3]{FischerMoncrief2002}. Recall that to fully reduce the phase space we must also quotient out by diffeomorphisms, which until now has not played a role. However, we now take advantage of that freedom by imposing the gauge condition relative to $\widetilde\gamma$,
\[
\widetilde \gamma^{jk}\widetilde\nabla_j \gamma_{kl} = 0,
\]
for metrics $\gamma$ in the reduced phase space. In particular, the perturbations $h$ that we consider must be divergence-free with respect to $\widetilde \gamma$. Since $h$ must also preserve the constant scalar curvature, we have
\[\begin{split}
D\scal_{\widetilde\gamma}(h)
&=
\widetilde\nabla^i\widetilde\nabla^j h_{ij} - \widetilde\Delta(\trace_{\widetilde \gamma}(h))
-\ric_{\widetilde\gamma}^{ij}h_{ij} \\
&=
\left(-\widetilde\Delta+(n-1)\right)\trace_{\widetilde\gamma}(h)=0,
\end{split} \]
since $\widetilde\gamma$ is Einstein. In particular, this implies that $\trace_{\widetilde\gamma}(h)$ vanishes and therefore that $h$ is transverse-traceless.

Now consider a curve of metrics $\gamma(\lambda)$ with $\gamma(0)=\widetilde\gamma$ and $\gamma'(0)=h$, and set $\ell=\gamma''(0)$. Then 
\[\begin{split}
D^2(dV_{\gamma})(h,h)&=\frac{d^2}{d\lambda^2}(dV_{\gamma(\lambda)})|_{\lambda=0} \\
&=
\frac{d}{d\lambda}
\left( \frac12\trace_{\gamma(\lambda)}(\gamma'(\lambda))dV_{\gamma(\lambda)} \right)|_{\lambda=0} \\
&=
\frac12\left( \trace_{\widetilde\gamma}(\ell)-|h|^2_{\widetilde\gamma} \right),
\end{split} \]
where we make use of the fact that $\trace_{\widetilde\gamma}(h)=0$. Putting this together into \eqref{eq-VRmass-2ndvar} and using the same curve of metrics to calculate the second variation of the surface integral at infinity, we find that
\begin{equation} \label{eq-VRmass-2ndvar-2}
\begin{split}
&D^2(\m_{VR,\mathring g}(\varphi^{\frac{4}{n-2}}\gamma))((h,r),(h,r)) \\
&\qquad=
\lim_{r\to\infty}\int_{\partial\Sigma_r} 
\left(\mathring g^{ij}\mathring \nabla_i \ell_{jk} 
-\mathring\nabla_k(\mathring g^{ij}\ell_{ij}) \right)\nu^k dS_{\mathring g}\\
&\qquad\qquad
+2\int_M |r|^2_{\widetilde\gamma}dV_{\widetilde\gamma}^{-2}\,dV_{\widetilde\gamma}
+ (n-1)\int_M \left( \trace_{\widetilde\gamma}(\ell)
-|h|^2_{\widetilde\gamma} \right)dV_{\widetilde\gamma} \\
&\qquad=
\int_M\Big( \widetilde\nabla^i\widetilde\nabla^j \ell_{ij} 
-\widetilde\Delta\ell + (n-1)\trace_{\widetilde\gamma}(\ell)\\
&\qquad\qquad
+2|r|_{\widetilde\gamma}^2dV_{\widetilde\gamma}^{-2}
-(n-1)|h|^2_{\widetilde\gamma}\Big)dV_{\widetilde\gamma} .
\end{split}
\end{equation}

It remains to evaluate the terms containing $\ell$, for a curve of metrics preserving scalar curvature and the gauge condition. To this end, we note that the combination of terms involving $\ell$ in \eqref{eq-VRmass-2ndvar-2} are precisely the linearized scalar curvature in the direction of $\ell$, that is
\[
D\scal_{\widetilde\gamma}(\ell)
= \widetilde\nabla^i\widetilde\nabla^j \ell_{ij} - \widetilde\Delta\ell
+(n-1)\trace_{\widetilde\gamma}(\ell).
\]
Since scalar curvature is constant on the curve of metrics $\gamma(\lambda)$ we have
\[
\frac{d^2}{d\lambda^2}(\scal_{\gamma(\lambda)})|_{\lambda=0}
= D\scal_{\widehat\gamma}(\ell) + D^2\scal_{\widetilde\gamma}(h,h)=0,
\]
where
\[ 
D^2\scal_{\widetilde\gamma}(h,h)
= \frac12 h^{ij}\widetilde\nabla^k\widetilde\nabla_k h_{ij}
-R^{ijkl}_{\widetilde\gamma}h_{jl}h_{ik}
-(n-1)|h|^2_{\widetilde\gamma}
\] 
is the second variation of scalar curvature, making use of the fact that $\widetilde\gamma$ is Einstein and $h$ is transverse-traceless. Bringing everything together we arrive at
\begin{equation}
\begin{split}
&D^2(\m_{VR,\mathring g}(\varphi^{\frac{4}{n-2}}\gamma))((h,r),(h,r)) \\
&\quad=
\int_M \left( -D^2\scal_{\widetilde\gamma}(h,h)+2|r|_{\widetilde\gamma}^2dV_{\widetilde\gamma}^{-2}-(n-1)|h|^2_{\widetilde\gamma} \right)dV_{\widetilde\gamma} \\ 
&\quad=
\int_M \left( \frac12 \widetilde\nabla_{k}(h_{ij})\widetilde\nabla^{k}(h^{ij})
+ R_{\widetilde\gamma}^{ijkl}h_{ik}h_{jl}
+ 2|r|^2_{\widetilde\gamma}dV_{\widetilde\gamma}^{-2} 
\right) dV_{\widetilde\gamma}\\
&=\int_M \left( \frac12 h^{ij}\Delta_E h_{ij} + 2|r|^2_{\widetilde\gamma}dV_{\widetilde\gamma}^{-2} 
\right) dV_{\widetilde\gamma}.
\end{split}
\end{equation}
\end{proof}
This implies the following immediate corollary.
\begin{cor}\label{cor-secondvar}
		Let $(\widetilde \gamma, 0)$ be a critical point of $\mathcal H_{red}$ on $\mathcal P_{red}$, define  
		\[
		\lambda_{\rm min}
		= \inf\frac{\int_M h^{ij}\Delta_E h_{ij} \dv_{\widetilde \gamma}}{\|h\|_{L^2(M,\widetilde\gamma)}^2},
		\]
		where the infimum is taken over $0\neq h\in C^{k,\alpha}_{\delta}(S^2(T^*M))$. Then the second variation of the volume-renormalized mass satisfies
		\begin{equation*}
			D^2(\m_{VR,\mathring g}(\varphi^{\frac{4}{n-2}}\gamma))((h,r),(h,r)) \\
			\geq \frac12 \lambda_{\rm min} \|h\|_{L^2(M,\widetilde\gamma)}^2+2\|r\|_{L^2(M,\widetilde\gamma)}^2.
		\end{equation*}
\end{cor}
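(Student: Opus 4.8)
The plan is to read off the estimate directly from the second-variation formula just established, namely
\[
D^2(\m_{VR,\mathring g}(\varphi^{\frac{4}{n-2}}\gamma))((h,r),(h,r)) = \int_M \left( \tfrac12 h^{ij}\Delta_E h_{ij} + 2|r|^2_{\widetilde\gamma}dV_{\widetilde\gamma}^{-2} \right) dV_{\widetilde\gamma},
\]
which holds for perturbations $(h,r) \in T_{(\widetilde\gamma,0)}\mathcal P_{red}$; recall that in the proof of that formula the gauge condition forces $h$ to be transverse-traceless with respect to $\widetilde\gamma$. The strategy is simply to bound the two terms on the right-hand side separately and add.

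First I would handle the term in $r$. Unwinding the $L^2$-inner product on symmetric $2$-tensor densities in $\mathcal K(M)$ — which is obtained by de-densitizing against $dV_{\widetilde\gamma}$, so that $|r|^2_{\widetilde\gamma}dV_{\widetilde\gamma}^{-2}\,dV_{\widetilde\gamma}$ is precisely the pointwise squared norm of the associated de-densitized tensor times $dV_{\widetilde\gamma}$ — gives $\int_M 2|r|^2_{\widetilde\gamma}dV_{\widetilde\gamma}^{-2}\,dV_{\widetilde\gamma} = 2\|r\|_{L^2(M,\widetilde\gamma)}^2$, with equality. For the term in $h$, the definition of $\lambda_{\rm min}$ as the infimum of the Rayleigh quotient $\int_M h^{ij}\Delta_E h_{ij}\,dV_{\widetilde\gamma}\,/\,\|h\|_{L^2(M,\widetilde\gamma)}^2$ over all nonzero $h \in C^{k,\alpha}_\delta(S^2(T^*M))$ immediately yields $\int_M h^{ij}\Delta_E h_{ij}\,dV_{\widetilde\gamma} \geq \lambda_{\rm min}\|h\|_{L^2(M,\widetilde\gamma)}^2$ for our particular perturbation $h$. (Note the infimum in the corollary ranges over all symmetric $2$-tensors, not just the transverse-traceless ones, so the bound applies a fortiori to our transverse-traceless $h$; and should $\lambda_{\rm min} = -\infty$ the inequality is vacuous, although $\Delta_E$ is in fact semibounded below on $C^{k,\alpha}_\delta$ by a weighted-Sobolev/indicial-root argument like the one used for $P_\gamma$ in the first-variation proof, so $\lambda_{\rm min}$ is finite.)

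Combining,
\[
D^2(\m_{VR,\mathring g}(\varphi^{\frac{4}{n-2}}\gamma))((h,r),(h,r)) \geq \tfrac12\lambda_{\rm min}\|h\|_{L^2(M,\widetilde\gamma)}^2 + 2\|r\|_{L^2(M,\widetilde\gamma)}^2,
\]
which is the assertion. There is no real obstacle here: this is an immediate consequence of the preceding theorem. The only points needing a word of care are the density bookkeeping that identifies $2|r|^2_{\widetilde\gamma}dV_{\widetilde\gamma}^{-2}dV_{\widetilde\gamma}$ with $2\|r\|_{L^2}^2$, and confirming that the Rayleigh-quotient infimum defining $\lambda_{\rm min}$ is finite so that the right-hand side is meaningful.
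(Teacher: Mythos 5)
Your proof is correct and matches the paper's treatment: the paper states the corollary as an immediate consequence of the preceding second-variation theorem, which is exactly your argument of bounding the $h$-term by the Rayleigh-quotient definition of $\lambda_{\rm min}$ and identifying the $r$-term with $2\|r\|_{L^2}^2$. The extra remarks on density bookkeeping and finiteness of $\lambda_{\rm min}$ are reasonable care but not part of the paper's (implicit) proof.
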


\section{Monotonicity of the Volume-Renormalized Mass}\label{Sec-mono}

We now show that Einstein evolution equations seek to minimise the volume-renormalized mass in the sense that the reduced Hamiltonian is monotonically decreasing unless the mass is zero, in which case it remains constant along the flow. We will make the choice $X=0$ throughout this section.

\begin{thm}\label{thm-mono}
	The volume-renormalized mass is non-increasing under Einstein evolution, and is constant if and only if the resulting spacetime is Milne-like.
\end{thm}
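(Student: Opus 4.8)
The plan is to read off the monotonicity from the Hamiltonian picture of Sections~\ref{Sec-CompactHam}--\ref{sec-hamredmilne} instead of differentiating $\m_{VR,\mathring g}$ by hand along the evolution equations. With $X=0$, write the reduced Hamiltonian in terms of the canonical pair $(\gamma,\mathfrak{p})$, $\mathfrak{p}=t^{n-1}p$, as
\[
\mathcal H_{red}(\gamma,\mathfrak{p};t)=t^{n-2}\,\m_{VR,\mathring g}(g),\qquad g=\varphi^{\frac{4}{n-2}}\gamma,
\]
where $\varphi$ solves the Lichnerowicz equation \eqref{eq-lichn} built from $(\gamma,p)$, and $|p|_\gamma^2=t^{2(1-n)}|\mathfrak{p}|_\gamma^2$ carries --- besides the prefactor $t^{n-2}$ --- the only explicit $t$-dependence. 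Since the CMC Einstein evolution with $X=0$ is the Hamiltonian flow of $\mathcal H_{red}$ in $(\gamma,\mathfrak{p})$ (the lapse being pinned by \eqref{eq-elliptic-N}), along a solution the total $t$-derivative of $\mathcal H_{red}$ equals its partial $t$-derivative at fixed $(\gamma,\mathfrak{p})$, because the Poisson bracket of $\mathcal H_{red}$ with itself vanishes. Cancelling the contribution $(n-2)t^{n-3}\m_{VR,\mathring g}(g)$ of the prefactor from both sides gives the key identity
\[
\frac{d}{dt}\,\m_{VR,\mathring g}(g(t))=\partial_t\,\m_{VR,\mathring g}(g)\Big|_{(\gamma,\mathfrak{p})},
\]
so only the $t$-dependence of the conformal factor $\varphi$ contributes.

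To evaluate the right-hand side I would use the conformal-change formula for the volume-renormalized mass obtained in the proof of Theorem~\ref{thm-firstvar},
\[
\m_{VR,\mathring g}(\varphi^{\frac{4}{n-2}}\gamma)=\m_{VR,\mathring g}(\gamma)+2(n-1)\int_M\Big(\varphi^{\frac{2n}{n-2}}-1-\frac{2}{n-2}\Delta_{\mathring g}\varphi\Big)\,dV_{\mathring g}.
\]
Differentiating at fixed $\gamma$ and $\mathfrak{p}$ removes $\m_{VR,\mathring g}(\gamma)$, and --- since $\partial_t\varphi|_{(\gamma,\mathfrak{p})}$ decays at infinity --- the Laplacian term integrates to zero, leaving
\[
\frac{d}{dt}\,\m_{VR,\mathring g}(g(t))=\frac{4n(n-1)}{n-2}\int_M\varphi^{\frac{n+2}{n-2}}\,\partial_t\varphi|_{(\gamma,\mathfrak{p})}\,dV_{\mathring g}.
\]
The sign of $\partial_t\varphi|_{(\gamma,\mathfrak{p})}$ is controlled by differentiating \eqref{eq-lichn} in $t$ at fixed $(\gamma,\mathfrak{p})$: one finds
\[
P_\gamma\big(\partial_t\varphi|_{(\gamma,\mathfrak{p})}\big)=-\frac{2(n-1)}{t}\,|p|_\gamma^2\,\varphi^{-\frac{3n-2}{n-2}}(dV_\gamma)^{-2},
\]
where $P_\gamma$ is exactly the linearised Lichnerowicz operator used in the proof of Theorem~\ref{thm-firstvar}. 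As established there, $P_\gamma=-\frac{4(n-1)}{n-2}\Delta_\gamma+V$ with $V>0$ (since $\varphi^{4/(n-2)}\ge1$ and $|p|_\gamma^2\ge0$), and $P_\gamma$ is an isomorphism between the relevant weighted H\"older spaces, which in particular gives the decay of $\partial_t\varphi|_{(\gamma,\mathfrak{p})}$ used above. As the right-hand side of the last display is $\le0$ for $t>0$, the maximum principle forces $\partial_t\varphi|_{(\gamma,\mathfrak{p})}\le0$, and therefore $\frac{d}{dt}\m_{VR,\mathring g}(g(t))\le0$.

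For the rigidity statement one argues: if $\frac{d}{dt}\m_{VR,\mathring g}(g(t))=0$ throughout an interval, then the integrand above is $\le0$ with vanishing integral, so $\partial_t\varphi|_{(\gamma,\mathfrak{p})}\equiv0$; injectivity of $P_\gamma$ forces $|p|_\gamma^2\equiv0$, hence $p\equiv0$, hence $\varphi\equiv1$ and the data equals $(\gamma,-(n-1)\gamma^{-1}dV_\gamma)$ with $\scal_\gamma=-n(n-1)$ at every time of the interval. On this locus the second fundamental form is pure trace, $\widehat K=\tfrac{\tau}{n}\widehat g$, so \eqref{eq-elliptic-N} becomes $\widehat\Delta N-\tfrac{n}{t^2}N=-\tfrac{n}{t^2}$ and forces $N\equiv1$; the first equation of \eqref{eq-evoeqs} then gives $\partial_t\widehat g=\tfrac{2}{t}\widehat g$, so $\widehat g=t^2\gamma$ with $\gamma$ independent of $t$, and substituting into the second equation of \eqref{eq-evoeqs} yields $\ric_{\widehat g}=-(n-1)t^{-2}\widehat g$, i.e.\ $\ric_\gamma=-(n-1)\gamma$. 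Hence the spacetime is Milne-like. Conversely, a Milne-like spacetime has $t$-independent rescaled metric $g(t)=\gamma$, so $\m_{VR,\mathring g}(g(t))$ is constant.

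The main obstacle is making the identity $\frac{d}{dt}\mathcal H_{red}=\partial_t\mathcal H_{red}$ rigorous in this noncompact, limiting setting: one must verify that the reduced Hamiltonian of Section~\ref{sec-hamredmilne} genuinely generates the CMC Einstein flow with all boundary terms at infinity under control --- this is exactly where the APE decay rates of Definition~\ref{defn-asympmilne} are needed, and they also supply the decay of $\partial_t\varphi|_{(\gamma,\mathfrak{p})}$ that lets one discard the Laplacian term. With that in hand, the monotonicity is a one-line maximum-principle argument reusing the operator $P_\gamma$ from Section~\ref{Sec-variations}, and the rigidity is a short bootstrap through \eqref{eq-evoeqs}. (One could instead compute $\frac{d}{dt}\m_{VR,\mathring g}(g(t))$ directly from \eqref{eq-evo} and the first variation of $\m_{VR,\mathring g}$, but then a term involving the trace-free part of the second fundamental form and $\ric_g$ appears which is not manifestly signed and must be absorbed using the second evolution equation and an integration by parts, as in the compact case of Fischer--Moncrief \cite{FischerMoncrief2002}; routing through the Hamiltonian avoids this.)
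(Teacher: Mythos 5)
Your route is genuinely different from the paper's and is essentially the Fischer--Moncrief strategy: invoke the Hamiltonian identity $\frac{d}{dt}\mathcal H_{red}=\partial_t\mathcal H_{red}\big|_{(\gamma,\mathfrak p)}$ to reduce everything to the explicit $t$-dependence entering through $|p|_\gamma^2=t^{2(1-n)}|\mathfrak p|_\gamma^2$ in the Lichnerowicz equation, then extract the sign from the maximum principle applied to the linearised Lichnerowicz operator $P_\gamma$ of Section~\ref{Sec-variations}. The paper instead differentiates $\m_{VR,\mathring g}(g(t))$ directly from the ADM evolution equations \eqref{eq-evoeqs}: after a chain of boundary manipulations using the momentum constraint, the APE decay rates and the lapse equation \eqref{eq-elliptic-N}, it arrives at the manifestly signed identity
\[
\frac{\partial}{\partial t}\m_{VR,\mathring g}(g)=-2(n-1)t^{-1}\int_M|K_0|_g^2\,N\,dV_g,
\]
where $K_0$ is the trace-free part of the rescaled second fundamental form. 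Your rigidity bootstrap ($p\equiv0\Rightarrow\varphi\equiv1\Rightarrow N\equiv1\Rightarrow\widehat g=t^2\gamma\Rightarrow\ric_\gamma=-(n-1)\gamma$) is correct and matches the paper's endgame.

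The genuine gap is the step you yourself flag as ``the main obstacle'': the identity $\frac{d}{dt}\mathcal H_{red}=\partial_t\mathcal H_{red}$ presupposes that the CMC Einstein flow with $X=0$ is \emph{exactly} the Hamiltonian flow of $\mathcal H_{red}$ with respect to the canonical pairing of $\gamma$ with $\mathfrak p=t^{n-1}p$ on the constrained, noncompact reduced phase space. Neither you nor the paper proves this: Sections~\ref{Sec-CompactHam}--\ref{sec-hamredmilne} only identify the conjugate momentum via a Legendre transform and compute the \emph{value} of $\mathcal H_{red}$; tangency of the flow to the constraint set $\{\scal_\gamma=-n(n-1),\ \trace_\gamma p=0,\ \divergence_\gamma p=0\}$ and the vanishing of all boundary contributions at infinity in Hamilton's equations are not established. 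This is precisely why the paper's proof of Theorem~\ref{thm-mono} bypasses the Hamiltonian formalism and computes $\frac{d}{dt}\m_{VR,\mathring g}$ by hand. As written, your argument therefore rests on an unverified structural assumption; to close it you would either have to prove the symplectic statement (a substantial task in this setting) or fall back on the direct computation --- which, contrary to your parenthetical worry, does not leave an unsigned Ricci term: the momentum constraint and the decay hypotheses kill the problematic boundary integrals and only the lapse term survives. A useful cross-check would be to verify that your final expression $\frac{4n(n-1)}{n-2}\int_M\varphi^{\frac{n+2}{n-2}}\,\partial_t\varphi\,dV_{\mathring g}$ coincides with the paper's $-2(n-1)t^{-1}\int_M|K_0|_g^2N\,dV_g$; the two must agree if both computations are correct.
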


\begin{proof}
We make use of the evolution equations \eqref{eq-evoeqs} to first calculate
\[ \begin{split}
\frac{\partial }{\partial t}(dV_{g})&=\frac{\partial }{\partial t}(dV_{t^{-2}\widehat g})\\
&=
\frac12\left( g^{ij}t^{-2}(-2NK_{ij})-2nt^{-1} \right)dV_g \\
&=nt^{-1}(N-1)dV_g
\end{split} \]
We then can can directly compute
\[\begin{split}
\frac{\partial}{\partial t}(\m_{VR,\mathring g}(g))
&=\lim_{R\to\infty}\Big(
\int_{\p B_R} \left(\mathring{g}^{ij}\onab_{i}(\frac{\partial }{\partial t}t^{-2}\widehat g_{jk})-\mathring{g}^{ij}\onab_k(\frac{\partial }{\partial t}t^{-2}\widehat g_{ij})\right)\nu_{\mathring{g}}^k\,dS_{\mathring g}\\
&\quad+2(n-1)nt^{-1}\int_{B_R}(N-1)\,dV_g\Big)\\
&=\lim_{R\to\infty}\Big(
t^{-2}\int_{\p B_R} \left(\mathring{g}^{ij}\onab_{i}(\frac{\partial }{\partial t}\widehat g_{jk})-\mathring{g}^{ij}\onab_k(\frac{\partial }{\partial t}\widehat g_{ij})\right)\nu_{\mathring{g}}^kdS_{\mathring g}\\
&\quad
-2t^{-1}\m_{ADM,\mathring g}(g,R)+2(n-1)nt^{-1}\int_{B_R}(N-1)dV_g\Big) \\
&=\lim_{R\to\infty}\Big(
-2 t^{-2}\int_{\p B_R}
 \left(\mathring{g}^{ij}\onab_{i}(NK_{jk})-\mathring{g}^{ij}\onab_k(NK_{ij})\right)\nu_{\mathring{g}}^kdS_{\mathring g}\\
&\quad
-2t^{-1}\m_{ADM,\mathring g}(g,R)+2(n-1)nt^{-1}\int_{B_R}(N-1)dV_g \Big)\\
&=\lim_{R\to\infty}\Big(
-2t^{-2}\int_{\p B_R}
 \Big(\mathring{g}^{ij}\onab_{i}(N)K_{jk}+\mathring{g}^{ij}\onab_{i}(K_{jk})N\\
&\quad
-\mathring{g}^{ij}\onab_k(N)K_{ij}-\mathring{g}^{ij}\onab_k(K_{ij})N\Big)\nu_{\mathring{g}}^kdS_{\mathring g}\\
&\quad -2t^{-1}\m_{ADM,\mathring g}(g,R)+2(n-1)nt^{-1}\int_{B_R}(N-1)dV_g \Big)\\
&=\lim_{R\to\infty}\Big(
-2t^{-2}\int_{\p B_R}
 \Big(-t\partial_{k} N + g^{ij}\onab_{i}(K_{jk}+t\mathring g_{jk})N\\
&\quad+nt\partial_k(N)-g^{ij}\onab_k(K_{ij}+t\mathring g_{ij})N\Big)\nu_{\mathring{g}}^kdS_{\mathring g}\\
&\quad-2t^{-1}\m_{ADM,\mathring g}(g,R)+2(n-1)nt^{-1}\int_{B_R}(N-1)dV_g\Big)\\
&=\lim_{R\to\infty}\Big(
-2(n-1)t^{-1}\int_{\p B_R}
 \partial_{k}(N)\nu_{\mathring{g}}^kdS_{\mathring g}-2t^{-1}\m_{ADM,\mathring g}(g,R)\\
&\quad
-2t^{-2}\int_{\p B_R}
 g^{ij}\left(\nabla_i(K_{jk}+t\mathring g_{jk})-\nabla_k(K_{ij}+t\mathring g_{ij})\right)\nu^k_{\mathring g}dS_{\mathring g}\\
&\quad +2(n-1)nt^{-1}\int_{B_R}(N-1)\Big)\\
&=\lim_{R\to\infty}\Big(
-2(n-1)t^{-1}\int_{\p B_R}
 \partial_{k}(N)\nu_{\mathring{g}}^kdS_{\mathring g}-2t^{-1}\m_{ADM,\mathring g}(g,R)\\
&\quad
-2t^{-1}\int_{\p B_R}
 g^{ij}\left(\nabla_i(\mathring g_{jk})-\nabla_k(\mathring g_{ij})\right)\nu^k_{\mathring g}dS_{\mathring g}\\
&\quad +2(n-1)nt^{-1}\int_{B_R}(N-1)dV_g\Big)\\
&=\lim_{R\to\infty}\Big(
-2(n-1)t^{-1}\int_{\p B_R}
 \partial_{k}(N)\nu_{\mathring{g}}^kdS_{\mathring g}-2t^{-1}\m_{ADM,\mathring g}(g,R)\\
&\quad
-2t^{-1}\int_{\p B_R}\mathring g^{ij}\left(\nabla_i(\mathring g_{jk}-g_{jk})-\nabla_k(\mathring g_{ij}-g_{ij})\right)\nu^k_{\mathring g}dS_{\mathring g}\\
&\quad +2(n-1)nt^{-1}\int_{B_R}(N-1)dV_g\Big)\\
&=\lim_{R\to\infty}2(n-1)\Big(
-t^{-1}\int_{\p B_R}
 \partial_{k}(N)\nu_{\mathring{g}}^kdS_{\mathring g}+nt^{-1}\int_{B_R}(N-1)dV_g\Big)
\end{split} \]
where we have made use of the fact that $(g-\mathring g),(K+t\mathring g)$, $N-1$, and $\partial N$ are all $O(r^{-\tau})$, as well as the momentum constraint. In order to simplify this further, we recall the elliptic equation for the lapse \eqref{eq-elliptic-N}, which can be expressed as
\[
\Delta N - |K_0|^2N = n(N-1),
\]
where $K_0$
is the (t-rescaled) traceless part of $K$ defined by
\[
K=t(K_0-g).
\]
Substituting this into the evolution equation for $\m_{VR,\mathring g}(g)$ and applying the divergence theorem, we find
\begin{equation}\label{eq-dm} \begin{split}
\frac{\partial }{\partial t}(\m_{VR,\mathring g}(g))
&=-2(n-1)t^{-1}\int_M\left|K_0\right|_g^2N dV_g.
\end{split} \end{equation}
That is, we see that the volume-renormalized mass is non-increasing along the evolution. Furthermore, $\frac{\partial }{\partial t}(\m_{VR,\mathring g}(g))=0$ if and only if $K_0$ vanishes. Suppose $\frac{\partial }{\partial t}(\m_{VR,\mathring g}(g))$ vanishes at some $t_0$ then $K_0=0$ at $t_0$ and from \eqref{eq-dm} it is clear that the second derivative of $\m_{VR,\mathring g}(g)$ also vanishes at $t_0$. Next, taking a 3rd derivative yields
\begin{equation*} \begin{split}
		\frac{\partial^3 }{\partial t^3}(\m_{VR,\mathring g}(g))
		&=-4(n-1)t^{-1}\int_M\left|\frac{\partial}{\partial t}K_0\right|_g^2N dV_g,
\end{split} \end{equation*}
which is strictly negative unless $\frac{\partial}{\partial t}K_0=0$ at $t_0$. Now from the elliptic equation for the lapse \ref{eq-elliptic-N} we conclude $N\equiv1$ and the evolution equations \ref{eq-evoeqs} then imply
\begin{equation*}
	\ric_{\widetilde\gamma}=-n(n-1)\widetilde\gamma.
\end{equation*}
That is, the development of the initial data determined by $(\widetilde\gamma,p)$ is Milne-like.
\end{proof}
\begin{rem}
	One may expect that the reduced Hamiltonian itself is monotone, rather than $t^{2-n}$ multiplied by it, as is the case in the compact setting \cite{FischerMoncrief2002}. This discrepancy arises due to the fact the relationship between $\tau$ and $t$ is fixed in our setting by the requirement that $N\to1$ at infinity, while in the compact setting the relationship between $\tau$ and $t$ can be prescribed. 
\end{rem}

\bibliographystyle{abbrvurl}
\bibliography{AHmass}

\end{document}